\newtheorem{theorem}{Theorem}
\newtheorem{lemma}{Lemma}
\newtheorem{proposition}{Proposition}
\renewcommand{\le}{\leqslant}
\renewcommand{\ge}{\geqslant}
\newcommand{\real}{\mathbb{R}}
\newcommand{\phz}{\phantom{0}}
\newcommand{\wh}{\widehat}
\newcommand{\bca}{BC$_a$}
\newcommand{\bsv}{\boldsymbol{v}}
\newcommand{\dbeta}{\mathrm{Beta}}
\newcommand{\dbin}{\mathrm{Bin}}
\newcommand{\dexp}{\mathrm{Exp}}
\newcommand{\dnorm}{\mathcal{N}}
\newcommand{\dpois}{\mathrm{Pois}}
\newcommand{\dunif}{\mathbb{U}}
\newcommand{\e}{\mathbb{E}}
\newcommand{\var}{\mathrm{var}}
\renewcommand{\skew}{\mathrm{skew}}
\newcommand{\eff}{\mathrm{eff}}
\newcommand{\sign}{\mathrm{sign}}
\newcommand{\simiid}{\stackrel{\mathrm{iid}}\sim}
\newcommand{\dotsim}{\stackrel{\cdot}{\sim}}
\newcommand{\rd}{\,\mathrm{d}}
\newcommand{\giv}{\!\mid\!}
\newcommand{\cf}{\mathcal{F}}
\newcommand{\olvv}{\overline{v^2}}
\newcommand{\olvk}{\overline{v^k}}
\newcommand{\olvvv}{\overline{v^3}}
\newcommand{\rmsl}{\mathrm{RMSL}}
\title{Better bootstrap $t$ confidence intervals for the mean}
\author{Art B. Owen\\Stanford University}
\date{August 2025}
\begin{document}
\maketitle

\begin{abstract}
This article explores combinations of weighted bootstraps,
like the Bayesian bootstrap, with the bootstrap $t$ method
for setting approximate confidence intervals for the mean
of a random variable in small samples.
For this problem the usual bootstrap $t$ has good
coverage but provides intervals with long and highly
variable lengths.  Those intervals can have infinite length
not just for tiny $n$, when the data have a discrete distribution.
The \bca\ bootstrap produces shorter intervals but
tends to severely under-cover the mean.
Bootstrapping the studentized mean with weights from a Beta$(1/2,3/2)$ distribution 
is shown to attain second order accuracy. It never yields infinite length
intervals and the mean square bootstrap $t$ statistic is finite when
there are at least three distinct values in the data, or two distinct
values appearing at least three times each.
In a range of small sample settings, the beta bootstrap $t$
intervals have closer to nominal coverage than the \bca\
and shorter length than the multinomial bootstrap $t$.
The paper includes a lengthy discussion of the difficulties in
constructing a utility function to evaluate nonparametric approximate
confidence intervals.
\end{abstract}

\section{Introduction}

This article is about nonparametric approximate confidence intervals
(ACIs) for the mean of a real  random variable.  There are two leading bootstrap
solutions for this problem. One is the bootstrap $t$ method
and the other is the bias corrected accelerated (BCa) method.
These are both well understood theoretically in the limit
as the sample size $n$ of an IID sample diverges to infinity.
Under mild conditions on the sampling distribution, both
are second order accurate with both one and two-sided
coverage errors decaying as $O(1/n)$.  Alternative methods that
are not second order accurate typically have one-sided
coverage errors of $O(1/\sqrt{n})$ with two-sided
errors of $O(1/n)$ \citep{hall:1988}.

For smaller values of $n$ there is a nasty tradeoff.
The bootstrap $t$ intervals come much closer to attaining
the desired nominal coverage, but they do so at the
expense of being quite long.  The BCa intervals are narrower
than the bootstrap $t$ intervals, but they very often have
coverage levels severely below the nominal target.
This article develops some alternative bootstrap $t$
confidence intervals that retain the favorable asymptotic
coverage while being noticeably narrower than the customary bootstrap $t$
intervals in a collection of small sample settings. These methods
attain much higher coverage than the \bca\ intervals.

One explanation for the wide intervals from the bootstrap $t$
method is that (as described below) some resamplings place zero weight
on some of the $n$ observations.  When one or more extreme
observations gets zero weight, the resampled standard deviation
becomes small which can make the sample mean appear to be an
outlier. We then get very wide tails for the resampled $t$
distribution, including atoms at $\pm\infty$.
The result is wide confidence intervals
for the mean and consequently, high coverage.

We can eliminate the zero weights by merging
the Bayesian bootstrap \citep{rubi:1981} with the bootstrap $t$. 
That provides asymptotically valid confidence intervals, though it lacks second order
accuracy for one-sided confidence intervals. Unfortunately,
the Bayesian bootstrap version of the bootstrap $t$ intervals does
not have very good small sample coverage in the cases examined
here. Perhaps the good coverage properties of the original bootstrap
$t$ are strongly dependent on those small weights.
A next step is to then to replace the unit exponential
weights of the Bayesian bootstrap by weights from a distribution that has
a singular probability density at the origin in order to get near zero
weights that remain positive.  Strictly positive weights keep
the resampled variance positive unless all $n$ observations are equal.
That positive resample variance then keeps the resampled $t$ value finite.
One such weight distribution is Beta($1/2,3/2$).
We show that it has a second order accuracy property as $n\to\infty$, so
its small sample advantages do not conflict with asymptotic performance.

If having some near zero weights
works well, we might expect that having many near zero
weights could work even better.  One way to do that is
to use a very heavy-tailed weight distribution such
that the largest sample value is often large compared to
the sum of the other $n-1$ values.
That is not viable in the case of bootstrap weights.
As noted below the weights must have finite
variance prior to normalization and that precludes
the largest one from dominating the others, at least for large $n$.
We will also see a reason to have a finite fourth moment for the weights.  
A very heavy-tailed weight distribution, the lognormal,
is seen to give very short bootstrap $t$ confidence intervals, often shorter
than even the \bca\ intervals, and with lower coverage.

It is also possible to resample with even more zero weights
than the multinomial distribution provides.  
The Poisson distribution with mean $1$ has been used
for bootstrap weights in \cite{oza:russ:2001}.  While its asymptotic probability
of a zero weight matches that of multinomial sampling,
it has higher probability of a zero weight for the small $n$
that we consider here.
\cite{hart:1969} considers sampling from all $2^n$ subsets of
the data which we can implement by giving each observation
weight $0$ or $1$ with probability $1/2$ each.  To keep
the mean weight equal to one, we can equivalently give
weight $0$ or $2$. \cite{owen:eckl:2012} refer to this as
the double-or-nothing bootstrap.  Here we use it in
a bootstrap $t$ along
with a half-sampling bootstrap $t$ described below.
These methods with more exact zero weights
give even wider confidence intervals
than the multinomial bootstrap $t$ does.  As a result, we can 
get a spectrum of bootstrap $t$ intervals ranging from some
that are even narrower than the \bca\ intervals 
to others that are wider than the usual multinomial bootstrap $t$.

For continuous data, it is easy to see that the probability
of an infinite resampled $t$ statistic is $n^{1-n}$ which
rapidly becomes negligible compared to a threshold
like $0.025$ as $n$ increases.  What is less
remarked on is that this probability can decay much more
slowly for discrete data.  In some computations below we see
infinite length intervals for the usual bootstrap $t$ with
$n=19$ for the mean of $\dpois(1)$ data.  With $\dbeta(1/2,3/2)$ weights,
the bootstrap $t$ statistic is finite. It can have heavy tails
for small $n$. However once there are three or more distinct
observations, or two distinct observations seen at least
three time each, the expected squared bootstrap $t$ value
given the data, becomes finite.

Most of what is known about bootstrap confidence intervals comes
from asymptotic expansions as $n\to\infty$, using assumptions about the
moments of the sampling distribution.   
Even for just one given bootstrap method there can be 
significant differences in small sample
performance for different distributions having the same low order moments.
An extreme version of this arises in the behavior of the bootstrap for
discrete data.  A Cram\'er condition that
\begin{align}\label{eq:cramer}
\limsup_{|t|\to\infty}|\e( \exp(itx))|<1
\end{align}
plays a large role in the asymptotic theory of \cite{hall:1986}.
The assumption provides more tractable Edgeworth expansions.
Equation~\eqref{eq:cramer} rules out cases
where the support of $x$ is confined to an arithmetic sequence,
so it is violated for integer-valued random variables
such as counts.

We are left with an unsatisfactory situation.  The cases with small $n$
have the most consequential differences and the least theoretical understanding.
The study for small values of $n$ has primarily
been done numerically.  Numerical investigation still seems to be
necessary, nearly fifty years after the bootstrap
appeared.  

As mentioned above, strictly positive weights keep the interval lengths finite.
Some other advantages of strictly positive reweighting arise in other contexts.
\cite{xu:etal:2020} mention several.
When every original data point is at least partially represented in every
bootstrap sample, then design matrices for regression that are non-singular
remain nonsingular in every bootstrap sample.  If a binary response is not
linearly separable as a function of the predictors  in the sample, then this 
continues to hold true in bootstrap
samples where every observation has positive weight. 
They also mention similar advantages for survival
analysis problems where the majority of the lifetimes
have been censored. \cite{mase:owen:seil:2024} use positive weights for a variable
importance problem so that while bootstrapping the importance of 
variable $j$ to an outcome for subject $i$, that subject $i$ is represented
in every bootstrap sample. \cite{owen2009monte} finds that quasi-Monte
Carlo sampling with the Bayesian bootstrap weights is very effective
for bootstrap bias estimation.

A great difficulty in comparing nonparametric ACIs for the mean is that there is no universally accepted
figure of merit with which to compare them.  It is impossible to get exactly the nominal
coverage and so we must 
choose between methods with higher coverage 
and methods with shorter intervals.
There are many rules of thumb and default choices in statistics
but none that say how much wider an interval could reasonably be in return
for reducing undercoverage by 1\%.
Without a quantitative answer we can only look at Pareto frontiers and make subjective
judgments.

An outline of this paper is as follows.
Section~\ref{sec:background} sets up the notation, describes \bca\ intervals
and weighted bootstrap $t$ intervals and gives some choices for the weights.
Section~\ref{sec:secondorder} shows that a bootstrap $t$ using Beta($1/2,3/2$)
weights meets the criteria for second order accuracy in \cite{hall:mamm:1994}.
It also has a finite expected mean squared bootstrap $t$ statistic
given data with three or more distinct values, or two distinct values appearing
at least three times each, and it compares interval lengths
for the case with $n=2$ distinct observations.
Section~\ref{sec:commontask} describes in detail the difficulties in defining a common
task framework to judge ACIs, considering calibration, elicitation,
decision theory and a similar tradeoff that arises for precision and recall in
classification.
Section~\ref{sec:testcases} presents a collection of 
test cases for  investigations of bootstrap ACIs
with numerical comparisons for sample sizes $2\le n\le 20$.
It makes a graphical comparison of the
methods most of which fall on a Pareto frontier trading off
interval length and coverage.
Section~\ref{sec:discussion} presents some conclusions.
Appendix~\ref{sec:proofs} contains some proofs.
Appendix~\ref{sec:smallnlit} surveys some of the literature on
the bootstrap for small $n$.
Appendix~\ref{sec:conventions} describes conventions for
handling some corner cases that only have a meaningfully
large probability when $n$ is as small as we consider here.

\section{Background and notation}\label{sec:background}

We suppose throughout that $x_1,\dots,x_n$ are sampled IID  from
a distribution $F$ with mean $\mu$ and variance $\sigma^2\in(0,\infty)$.
A  two-sided confidence interval for $\mu$, at level $\alpha\in(0,1)$,
is a pair of random variables $L$
and $U$, given as functions of $x_1,\dots,x_n$
for which $\Pr( L\le \mu\le U) = \alpha$. We always have $L\le U$.
A customary default choice is $\alpha =0.95$.
While $0.95$ is not a very high confidence level, it is already
a challenge to get close to this coverage in small samples,
even for two-sided coverage.

The two-sided intervals we consider are formed from a pair
of one-sided intervals.
We select $0<\alpha_1<\alpha_2<1$
and seek random $L$ and $U$ with
$\Pr( \mu < L) =\alpha_1$ and $\Pr(\mu\le U)=\alpha_2$
so that $\Pr( L \le \mu \le U) = \alpha_2-\alpha_1$.
For equal one-sided coverage errors we take $\alpha_1=(1-\alpha)/2$
for $\alpha>1/2$ and $\alpha_2 = 1-\alpha_1=(1+\alpha)/2$.
Then the customary values are $\alpha_1=0.025$ and $\alpha_2=0.975$.

Although we want an exact confidence interval for $\mu$, we can
only have one under special circumstances.  Confidence intervals are possible
within parametric families of distributions, but then in addition
to $1-\alpha$ uncertainty from the interval, we have uncertainty
about the extent to which model violations affect our interval.
In a Bayesian setting we may well obtain a credible interval
with our desired coverage level. That calibration extends
to others who share the same prior distribution, but not universally. 
Bayesian methods are often used pragmatically with
noninformative or weakly informative priors 
in order to improve upon frequentist methods.
For example, \cite{gree:pool:2013} note that under weak priors
``95\% confidence intervals approximate 95\% posterior probability intervals''.
Such intervals are also not exact and we face the same tradeoff
of calibration versus length.

While we prefer a nonparametric confidence interval, those are
essentially ruled out by a theorem of \cite{baha:sava:1956}.
Here is the description from \cite{err4qmc}:
\begin{quote}They consider a set $\cf$ of distributions on $\real$.
Letting $\mu(F)$ be $\e(Y)$ when $Y\sim F\in\real$, their conditions are:
\begin{compactenum}[\quad (i)]
\item For all $F\in\cf$, $\mu(F)$ exists and is finite.
\item For all $m\in\real$ there is $F\in\cf$ with $\mu(F)=m$.
\item $\cf$ is convex: if $F,G\in\cf$ and $0<\pi<1$,
  then $\pi F+(1-\pi)G\in \cf$.
\end{compactenum}
Then their Corollary 2 shows that a Borel set constructed
based on $Y_1,\dots,Y_N\stackrel{\mathrm{iid}}
\sim F$
that contains $\mu(F)$ with probability at least $1-\alpha$
also contains any other $m\in\real$ with probability at least $1-\alpha$.
More precisely: we can get a confidence set, but not a useful one.
They allow $N$ to be random so long as $\Pr(N<\infty)=1$.
\end{quote}

In special settings, we know that $0\le x\le 1$ and in those
cases it is possible to construct confidence intervals using
Hoeffding's theorem or using empirical Bernstein intervals.
See \cite{waud:ramd:2024} for a survey and some new
methods that are asymptotically narrower
than predecessors. While these methods attain at least
the desired coverage, they are much wider than the
intervals based on the CLT \citep{aust:mack:2022}.
Similarly, if $x$ has a symmetric distribution with $\e(|x|)<\infty$
then $\e(x)$ equals the median of $x$ and it is possible to
get a confidence interval for it.

When $F$ does not belong to a known parametric family and $x$
does not satisfy some special side condition (such as known bounds 
or symmetry) then we must rely instead on
approximate confidence intervals (ACIs). These are usually based on
the central limit theorem where 
$\sqrt{n}(\bar x-\mu)/\sigma\dotsim\dnorm(0,1)$
for `large enough' $n$. There is no sharp distinction where
we can say that $n$ is large enough while $n-1$ isn't.
A very common rule of thumb is to consider $n\ge30$
large enough.  \cite{kufs:2010} looks into the history of this
rule of thumb. \cite{mill:1986} even suggests that $n\ge10$
may be enough, if one can accept mild skewness in $\bar x$.
On the other hand \cite{hest:2015} has stringent conditions
(one-sided coverages within $0.0025$ of $0.025$)
where Student's $t$ needs $n\ge 4815$ for $x_i\simiid\dexp(1)$.
Based on some simulations,
\cite{boos:hugh:2000} suggest $n\ge (5.66\gamma)^2$
to get nominal 95\% intervals Student's $t$ intervals with two-sided coverage at least $94$\%,
when $x_i$ have skewness $\gamma$. \cite{koha:etal:2014}
find that for some highly skewed electronic commerce data
this guideline requires $n\ge114{,}000$.  These higher numbers
provide good motivation for further investigation of confidence intervals.

The suitability of these rules of thumb depends on how
non-Gaussian the distribution of $x$ is and how harmful it is to have
less than nominal coverage.  In practice we must choose an ACI
without precise information about these.  

We will look at performance of bootstrap ACIs for $n$
in the range from $2$ to $20$. It is not reasonable to expect
good results for $n=2$.  Nor is there a sharp line where
we can say that a reasonable result can be obtained 
for $n\ge n_0$ but not for  $n\le n_0-1$.  
The bootstrap methods we consider will all perform very
well as $n\to\infty$ and will all be problematic for $n=2$
on difficult distributions.
What we can do is observe the trajectory of their length
and coverage as a function of $n$ and see which
of them encounter less difficulty when $n$ is small.
Investigators may have some idea which example settings
are most similar to their problem, even if that knowledge
cannot be expressed precisely.

\subsection{Approximate confidence intervals}

The ACIs we consider are based on
the sample mean and variance
$$
\bar x = \frac1n\sum_{i=1}^nx_i\quad\text{and}\quad
\hat \sigma^2 = \frac1n\sum_{i=1}^n(x_i-\bar x)^2,
$$
and bootstrap versions of these quantities.
The unbiased sample variance estimate is $s^2 = n\hat\sigma^2/(n-1)$.
It figures in the sample $t$ quantity
$$
t = \sqrt{n}\frac{\bar x-\mu}s
$$
in which $\mu$ is a hypothesized value for $\e(x)$.

The best known ACI for the mean has upper and lower
limits
$$
\bar x \pm s t^{1-\alpha/2}_{(n-1)}/\sqrt{n}
$$
where $t^{1-\alpha/2}_{(n-1)}$ is the $1-\alpha/2$ quantile
of Student's $t$ distribution on $n-1$ degrees of freedom.
It satisfies
$$
\lim_{n\to\infty}
\Pr\Bigl(
\bar x  -\frac{s}{\sqrt{n}}t^{1-\alpha/2}_{(n-1)}
\le 
\mu\le\bar x  +\frac{s}{\sqrt{n}}
t^{1-\alpha/2}_{(n-1)}
\Bigr) = 1-\alpha
$$
by the central limit theorem and it is exact when
$x_i\simiid\dnorm(\mu,\sigma^2)$ and $n\ge2$.

For small samples from a distribution $F$ that includes atoms
we must allow for the possibility that $\hat\sigma=0$.
We adopt a convention that the resulting ACI
is simply the point $[\bar x,\bar x]$. Any value of $\mu$ other
than $\bar x$ would have given $|t|=\infty$.
This is the first of several conventions that we need to
use in order to handle very small $n$.
They are collected in Appendix~\ref{sec:conventions}.

\subsection{Bootstrap $t$}
Here we present the bootstrap $t$ algorithm of \cite{efro:1981}.
It is also known as the percentile $t$ algorithm
\citep{hall:1986}.

For our purposes of generalizing the bootstrap
$t$, it is more convenient to use $\hat\sigma^2$
than $s^2$.
The corresponding $t$ statistic is
$$
\tilde t = \sqrt{n}\frac{\bar x-\mu}{\hat\sigma}
= \sqrt{\frac{n}{n-1}}t.
$$
To get a bootstrap sample $x_1^*,\dots,x_n^*$ we draw indices
$j(1),\dots,j(n)$ IID from $\dunif\{1,2,\dots,n\}$ and set
$x_i^* =x_{j(i)}$ for $i=1,\dots,n$.  
This amounts to resampling the original $x_i$
values with replacement. From the bootstrap data we can compute
\begin{align}\label{eq:boott}
\bar x^* = \frac1n\sum_{i=1}^nx_i^*,\quad
\hat\sigma^{*2} = \frac1n\sum_{i=1}^n(x_i^*-\bar x^*)^2\quad
\text{and}\quad
t^* = \sqrt{n}\frac{\bar x^*-\bar x}{\hat\sigma^*}.
\end{align}
The bootstrap $t$ ACI is based on equating the
unknown distribution of $\tilde t=\sqrt{n}(\bar x-\mu)/\hat\sigma$
under $x_i\simiid F$ to the distribution of $t^*$ from
equation~\eqref{eq:boott}. 
We write $t^*$ instead of $\tilde t^*$ since the presence
of the asterisk already indicates that it is a bootstrap
quantity.
The distribution of $t^*$ can be computed to any desired level of accuracy
by sampling data sets $(x_1^*,\dots,x_n^*)$ some number $B$ of times.  A common
choice is $B=2000$.   The resulting ACI is
\begin{align}\label{eq:bootci}
\bigl[\bar x - \hat\sigma t^{(*\alpha_2)}/\sqrt{n}
,\bar x - \hat\sigma t^{(*\alpha_1)}/\sqrt{n}\,\bigr]
\end{align}
where $t^{(*\alpha_j)}$ is the $\alpha_j$ quantile of the distribution
of $t^*$.  It might be more precise to write 
$\hat t^{(*\alpha_j)}$, as it is an estimated quantile
but we will write $t^{(*\alpha_j)}$ to get 
less cluttered notation.

\cite{hall:mamm:1994} consider several ways to define a bootstrap $t$.
When their weights $N_{ni}^*$ are normalized to sum to $n$,
like $nw_i$ here, then their $\hat\sigma^{*2}$ on page 2017
reduces to $\hat\sigma^{*2}$ above.  They refer
$(\bar x-\mu)/\hat\sigma$ to $(\bar x^*-\bar x)/\hat\sigma^*$ 
which is the same bootstrap approximation we use here.
\cite{maso:newt:1992} study a bootstrap $t$ method where
$\hat\sigma^*$ is multiplied by a standard deviation of the
resampling weights.  The multinomial bootstrap $t$ is not
then a special case of their quantity.
\cite{csor:mart:nasa:2024} consider this same statistic
as well as $\sqrt{\sum_{i=1}^nv_i}(\bar x^*-\bar x)/\hat\sigma^*$.
That latter statistic does generalize $t^*$.  They show consistency
but do not consider convergence rates.

The bootstrap $t$ is based on a plug-in principle.
If we knew the distribution of $(\bar x-\mu)/\hat\sigma$
for $x_i\simiid F$,
we could use it along with the known values of $\bar x$ 
and $\hat\sigma$  to get exact confidence intervals for $\mu$.
We don't know that distribution but we can get
the distribution of $(\bar x^*-\bar x)/\hat\sigma^*$
for $x_i^*\simiid \wh F$ where $\wh F$ is the empirical
distribution of $x_1,\dots,x_n$.  So we `plug-in' $\wh F$ for $F$
to approximate the distribution of $(\bar x-\mu)/\hat\sigma$
by that of $(\bar x^*-\bar x)/\hat\sigma^*$ and this gives
us an ACI for $\mu$.

There is positive probability that $\hat\sigma^*=0$.
This will ordinarily give $t^* =\pm\infty$, resulting in wider ACIs.
With no ties in the data, $\Pr( \hat\sigma^* =0)=n^{1-n}$ 
which for moderately large $n$ is typically much
smaller than $\min(\alpha_1,1-\alpha_2)$. Then retaining
some infinite values of $|t^*|$ still results in a finite
ACI length that is wider than it would be if we had omitted the infinite cases.
Some weighted bootstraps that we consider
below have larger values of $\Pr(\hat\sigma^*=0)$ than the
usual bootstrap $t$ has, and we see wider ACIs for them.
For discrete data, such as counts, the possibility of $\hat\sigma^*=0$
is greatly increased.

When there are ties in the data, it is possible to have $\bar x^* - \bar x = \hat\sigma^*=0$.
We adopt a convention that $t^*=0$ in this case.
For a method  with more frequent occurences of $\bar x^* - \bar x = \hat\sigma^*=0$,
the distribution of $t^*$ will have a larger atom at $0$.
When $\Pr( x=\mu)=0$, this will result in
shorter ACIs with reduced coverage of the true mean, leading
us to prefer a different method.

Instead of the bootstrap $t$, one could simply use the quantiles of
$\bar x^{(*\alpha_j)}$ themselves to form an ACI for $\mu$.
This is known as the percentile method. 
It is not competitive with the bootstrap $t$ method when we want an ACI for the mean.
It is not as well regarded as the \bca\ method described next.
The percentile method and some other bootstraps are described in \cite{efro:tibs:1993}.

\subsection{Bias corrected accelerated bootstrap}\label{sec:bca}

The bias corrected accelerated, or \bca, ACI
is from~\cite{efro:1987}.  Here we consider the version used
in the R function {\tt bcanon} from the {\tt bootstrap}
package \cite{R:bootstrap} based on S functions by R.\ Tibshirani
for the book \cite{efro:tibs:1993}. We describe it for a sample mean though
it can be applied to more general functions.

The \bca\ interval uses the same values $x_1^*,\dots,x_n^*$ and their
average $\bar x^*$ that the bootstrap $t$ does.
Given $B$ values of $\bar x^*$ it sets a bias adjustment parameter
\begin{align}\label{eq:defz0}
z_0 = \Phi^{-1}\Bigl( \frac1B\sum_{b=1}^B 1\{ \bar x^{*b} <\bar x\}\Bigr).
\end{align}
Then it  uses leave one out means
$\bar x_{-i} = (n\bar x-x_i)/(n-1)$ 
to define an acceleration quantity
$$
a = \frac{\sum_{i=1}^nu_i^3}
{6\bigl(\sum_{i=1}^n u_i^2\bigr)^{3/2}}
$$
for $u_i =\bar x-\bar x_{-i} = (x_i-\bar x)/(n-1)$.
Then modifying the limits $\alpha_j$  for $j=1,2$ to
$$\tilde \alpha_j = \Phi\Bigl(
z_0  + \frac{z_0+z_j}{1-a(z_0+z_j)}\Bigr)
$$
the lower and upper confidence limits are the quantiles
$\bar x^{*(\tilde \alpha_j)}$ for $j=1,2$.

The \bca\ method is harder to interpret than the bootstrap $t$.
It is derived using the idea that there exists a monotone
increasing transformation $\phi(\cdot)$ for which $\phi(\bar x)\sim \dnorm(\phi(\mu)-z_0\sigma_\phi,
\sigma^2_\phi)$ where $\sigma_\phi = 1+a\phi(\mu)$.
The form of the acceleration is based on an asymptotic argument
to adjust the confidence level to be second order accurate.
\cite{dici:efro:1996} remark that the \bca\ intervals
strike some people as being `unbootstraplike' because the
formula for $a$ does not come from the bootstrap replications. 

To use the  \bca\ interval we only need to be able to calculate our scalar statistic
of interest on bootstrap samples of the data.  
The bootstrap $t$ requires a quantity to use for $\hat\sigma$.
When the statistic of interest is not a sample mean, then the \bca\ intervals
remain easy to use while the bootstrap $t$ can become difficult to use.

\subsection{Bootstrap as reweighting}

In the bootstrap sample $x_1^*,\dots,x_n^*$ the observation
$x_i$ is included a random number $v_i$ of times.
Then $\bar x^* = \sum_{i=1}^n w_i x_i$ where
the weights $w_i$ are normalized 
counts:  $w_i = v_i/\sum_{i'=1}^nv_{i'} = v_i/n$.
The distribution of each $v_i$ is $\dbin(n,1/n)$
and the joint distribution of $(v_1,\dots,v_n)$
is a multinomial.

The Bayesian bootstrap of \cite{rubi:1981} uses different weights.
It samples $v_i\simiid\dexp(1)$ and then takes
$w_i = v_i/\sum_{i'=1}^nv_{i'}$.
The pseudo-counts $v_i$ are positive non-integer values.
This bootstrap  has a connection to Bayesian statistics
and is usually used in percentile methods.

We can write the bootstrap $t$ method in terms of
weighted resampling for general  bootstrap weights. For that, we take
$$
\hat\mu^* = \sum_{i=1}^nw_ix_i,\quad
\hat\sigma^{2*} = \sum_{i=1}^nw_i(x_i-\hat\mu^*)^2
\quad\text{and}\quad
t^{*}
 = \sqrt{n}\frac{\hat\mu^*-\bar x}{\hat\sigma^*}.
$$
Then the ACI is
\begin{align}\label{eq:btaci}
\Bigl[\bar x-\frac{\hat\sigma}{\sqrt{n}}t^{(*\alpha_2)},
\bar x-\frac{\hat\sigma}{\sqrt{n}}t^{(*\alpha_1)}
\Bigr].
\end{align}
The quantiles of $t^*$ are estimated by sampling pseudo-counts,
normalizing them into weights and then computing the weighted
mean, variance and $t$ statistic.
The bootstrap $t$ ACIs we consider take the form~\eqref{eq:btaci} for
different distributions of $(v_1,\dots,v_n)$.

In the weighting approach we are not exactly mimicking the sampling process that lead to
the original data $x_i$ by plugging in $\wh F$ for $F$.  We are however retaining
the idea of randomly reweighting the data that the original bootstrap has.
In that sense, it is bootstraplike.

We can use some other weights than the multinomial or exponential ones
but the weights cannot be completely arbitrary.
For properly calibrated asymptotic coverage it suffices
for IID pseudocounts $v_i$ to have mean $1$ and
variance $1$.  For second order accuracy we also want
skewness one \citep{hall:mamm:1994}.
That is, we want $\e(v_i)=\e((v_i-1)^2)=\e((v_i-1)^3)=1$.
We will only consider $v_i\ge0$, because then $\Pr(\hat\sigma^{*2}<0)=0$.

The assumption that $\e(v_i)=\var(v_i)=1$ has a consequence in terms of
effective sample size.  Conditionally on the values of pseudo-counts $v_i$, the variance of 
the weighted mean $\sum_{i=1}^nv_ix_i/\sum_{i=1}^nv_i$ is $\sigma^2\sum_{i=1}^nv_i^2/(\sum_{i=1}^nv_i)^2$.
Equating that to $\sigma^2/n_{\eff}$ gives an effective sample size of
$$
n_{\eff} = \frac{(\sum_{i=1}^nv_i)^2}{\sum_{i=1}^nv_i^2}.
$$
We find that $n_{\eff}/n\to 1/2$ as $n\to\infty$ when $v_i$ are IID with mean $1$ and variance~$1$.
Thus we may think of the bootstrap as a reweighting the data using an effective sample
size of about $n/2$.

The Poisson distribution with mean $1$ has variance and skewness also equal to $1$.
\cite{oza:russ:2001} 
use $v_i\simiid\dpois(1)$ in online bootstrap aggregation for machine learning.
For large $n$, the multinomial weights of the bootstrap are almost equivalent to using
independent Poisson weights with mean $1$. Independent Poisson weights are more convenient
than multinomial weights in a streaming environment.

The random variables $v_i\simiid\dunif\{0,2\}$ have mean $1$ and variance $1$
and they can be used in a bootstrap.
For even $n$, we can similarly select $n/2$ of the $v_i$ to be $2$ in a simple random
sample and let the other $v_i$ be $0$. Neither of these will lead to a second
order accurate bootstrap.
These methods can be derived using the typical value theory of \cite{hart:1969}.
A set of $N-1$ distinct `typical' values partitions $\real$ into $N$
intervals (two of which have infinite length) such that each of the $N$ intervals
has probability $1/N$ of containing the parameter of interest.  For a symmetric
distribution we can take means of  $N=2^n-1$ different  subsamples of the
data and then form a confidence interval from the union of a `central'
subset of those intervals.

For large $n$, normalized $\dunif\{0,2\}$ weights correspond very closely to the
half-sampling method used in survey sampling \citep{mcca:1969}
which takes simple random samples of half of the data points.
In our bootstrap $t$ version of half-sampling, when $n$ is even
we select $n/2$ of $n$ observations by a simple random sample
where all ${n \choose n/2}$ selections being equally probable.
When $n$ is odd we take a simple random sample of either $(n+1)/2$ points
or $(n-1)/2$ points, each with probability $1/2$.

The Poisson and double-or-nothing weights can both give
$v_1=v_2=\cdots=v_n=0$ in which case the weights $w_i$
are not well defined. For double-or-nothing this happens
with probability $2^{-n}$ while for the Poisson weights
it happens with probability $\exp(-n)$.  The chance of this
happening is miniscule for modestly large $n$ but for smaller
$n$ this probability is not negligible compared to
customary values of  $\alpha_1$ and $1-\alpha_2$.
A reasonable convention for this case is to discard such weights
and then sample $(v_1,\dots,v_n)$ until $B$ cases with  $\max_i v_i>0$
have been obtained.

In his work on the wild bootstrap, \cite{mamm:1993} presents a distribution on just two support
points that has mean zero, variance one and skewness one.  Adding one to this random variable
gives a  positive random pseudo-count  $v$ 
that satisfies the conditions for second order accuracy. Specifically, if
\begin{align}\label{eq:mammenpseudocounts}
\Pr\Bigl(v = \frac{3\pm\sqrt{5}}2\Bigr)  = 
\frac12\mp\frac1{2\sqrt{5}},
\end{align}
then $\e(v)=\var(v)=\e((v-1)^3)=1$.

We can combine 
$\e(v)=\var(v)=\e((v-1)^3)=1$ with $\Pr(v=0)>0$
by taking $v=0,1,3$ with probabilities $1/3,1/2,1/6$
respectively.  This $v$ has kurtosis $0$.  Another solution takes $v=0,1,2,4$
with probabilities $9,8,6,1$ divided by $24$.  It has kurtosis $1$
like the Poisson distribution.
These combine second order accuracy with an
atom at zero. We don't consider them further because
Poisson weights already have both of those properties.

There has not been much attention paid to the kurtosis of the
pseudo-counts.  One exception is \cite{guil:1999}
who gives a kurtosis condition that makes the
one-sided coverage errors $O(n^{-3/2})$ for studentized
statistics like we study here. In other words, she can obtain
third order accuracy.  In addition to a Cramer
condition, Guillou requires the sample skewness of $v_i$
to be $1+O_p(n^{-1/2})$ and also requires the sample kurtosis
of the $v_i$ to satisfy a relation with the sample kurtosis of the~$x_i$.

\subsection{Weights that are singular at the origin}\label{sec:betabootstrap-t}

We would like a distribution for pseudocounts $v$ with $\Pr(v>0)=1$
but for which $v$ takes values very close to zero with high
probability, by having a probability density function that diverges
to infinity as $v\to0$.  The $\dbeta(a,b)$ distributions with first parameter $0<a<1$
are singular at the origin, and scaling them to have mean and variance
equal to one, generates a parametric family of distributions indexed by $a$.

\begin{proposition}
For $0<a<1$, and $b=(a^2+a)/(1-a)$ let
$x\sim\dbeta(a,b)$ and $v=Ax$ for $A=2/(1-a)$.
Then $\e(v)=\var(v)=1$ and $v$ has skewness $2a$.
\end{proposition}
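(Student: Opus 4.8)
The plan is to treat $v = Ax$ as a positive affine rescaling of a Beta variable and invoke the standard moment formulas for the Beta family, after which everything reduces to algebra in the single parameter $a$. Since $A = 2/(1-a) > 0$, the scaling affects only the scale of the distribution, so $\e(v) = A\,\e(x)$ and $\var(v) = A^2\var(x)$, while the skewness is invariant under multiplication by a positive constant, giving $\skew(v) = \skew(x)$. Thus I only need the mean, variance, and skewness of $x \sim \dbeta(a, b)$ for the prescribed $b$, and I can quote these from the usual closed forms for the Beta distribution.

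The single identity that makes the whole computation collapse is the value of $a + b$. With $b = (a^2 + a)/(1-a) = a(a+1)/(1-a)$, I would combine over the common denominator $1-a$ to obtain
$$
a + b = \frac{a(1-a) + a(a+1)}{1-a} = \frac{2a}{1-a} = aA.
$$
From this the Beta mean formula $\e(x) = a/(a+b)$ gives $\e(x) = 1/A$ at once, hence $\e(v) = 1$. For the variance I would use $\var(x) = ab/\bigl((a+b)^2(a+b+1)\bigr)$ together with $(a+b)^2 = a^2 A^2$ and the easily checked value $a + b + 1 = (a+1)/(1-a)$. Then $\var(v) = A^2\var(x)$ simplifies, after cancelling $A^2/(a+b)^2 = 1/a^2$, to $b/\bigl(a(a+b+1)\bigr)$, and substituting the closed forms for $b$ and $a+b+1$ leaves exactly $1$.

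For the skewness I would start from the standard Beta formula
$$
\skew(x) = \frac{2(b-a)\sqrt{a+b+1}}{(a+b+2)\sqrt{ab}},
$$
and express each ingredient in terms of $a$ alone: $b - a = 2a^2/(1-a)$, $a+b+2 = 2/(1-a)$, $ab = a^2(a+1)/(1-a)$, and the earlier $a+b+1 = (a+1)/(1-a)$. The one point needing a little care is the square root $\sqrt{ab} = a\sqrt{(a+1)/(1-a)}$, which is valid because $a > 0$ on the stated range; the factors $\sqrt{(a+1)/(1-a)}$ in numerator and denominator then cancel, leaving $4a^2/(2a) = 2a$. I do not anticipate a genuine obstacle, since the argument is purely a verification: the only ``work'' is keeping the bookkeeping of the $(1-a)$ denominators and the square roots tidy, and the clean identity $a + b = aA$ is precisely what keeps the algebra from becoming unwieldy.
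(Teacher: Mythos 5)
Your proof is correct and follows essentially the same route as the paper's: establish $a+b=aA$, quote the standard Beta moment formulas, and reduce everything to algebra in $a$ (your use of $\var(x)=ab/((a+b)^2(a+b+1))$ versus the paper's $\mu(1-\mu)/(a+b+1)$ form is an immaterial variation). Incidentally, your value $b-a=2a^2/(1-a)$ is the correct one; the paper's proof writes $2a/(1-a)$ there, a typo that does not affect its conclusion.
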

\begin{proof}
First  $a+b=2a/(1-a)=aA$ so
$$\e(x) = \frac{a}{a+b} =\frac1A$$
and then $\e(v)=1$.
Next $a+b+1=aA+1=(1-2/A)A+1=A-1$, and so
$$
\var(x) = \frac1A\Bigl(1-\frac1A\Bigr)(a+b+1)^{-1}
 = \frac1A\Bigl(1-\frac1A\Bigr)\frac1{A-1}=\frac1{A^2}
$$
so $\var(v)=1$.
Finally, $v$ and $x$ both have skewness
\begin{align*}
\gamma(a)=\frac{2(b-a)\sqrt{a+b+1}}{(a+b+2)\sqrt{ab}}
\end{align*}
Now $a+b+1=(1+a)/(1-a)$, $b-a=2a/(1-a)$ and $ab=a^2(a+1)/(1-a)=a^2(a+b+1)$, so
$$
\gamma(a) = \frac{2(b-a)}{(a+b+2)a}
= \frac{4a(1-a)^{-1}}{2(1-a)^{-1}a} =2a
$$
as required.
\end{proof}

There are a few convenient special cases.  First, taking $a=1/2$
we get $v = 4\times\dbeta(1/2,3/2)$ with mean, variance and
skewness all equal to $1$.  
Theorem~\ref{thm:2ndorder} below shows that this distribution
satisfies the conditions of \cite{hall:mamm:1994} for second
order accuracy. We call the resulting method the beta bootstrap $t$.

Another convenient choice arises with $b=1$.  We get this by taking
$a=\sqrt{2}-1\doteq 0.41$ and $A=2+\sqrt{2}$.  Then the probability density
of $v$ is 
\begin{align}\label{eq:defpowerdistn}
f(v) = \frac{a}{A^{a}}v^{a-1}1_{0<v<A}.
\end{align}
We call the resulting method the power bootstrap $t$
as $f$ is simply a negative power of $v$, scaled and truncated to $(0,A)$.
Properties of these beta distributed weights and some other weights are summarized in Table~\ref{tab:pcounts}.

Smaller values of $a$ correspond to beta distributions with
even stronger singularities at the origin. We see below that
generally results in  longer ACIs and higher coverage.
For instance the power bootstrap $t$ typically has longer
intervals than the beta bootstrap $t$.
If we take the limit as $a\to0$ then $A\times\dbeta(a,b)$ converges
in distribution to $\dunif\{0,2\}$, so the double-or-nothing
bootstrap arises as the limit of this beta family.
Some other interesting examples are $3\times\dbeta(1/3,2/3)$
and $6\times\dbeta(2/3,10/3)$
because they have simple expressions with round numbers.
When  $a<\sqrt{2}-1$, we get $b<1$ and then the density is
no longer monotonic decreasing in~$v$.
For $a\ge1$, it is impossible to scale the beta distribution
to have mean and variance both equal to one as this would
give $b\not\in(0,\infty)$.
In a mixture model with random $a\in(0,1)$ the variable
$v\sim A(a)\dbeta(a,b(a))$, has mean variance and
skewness equal to $1$ if $\e(a)=1/2$.

Some gamma distributions have unbounded probability density
functions at the origin.  However the only gamma distribution
with mean one and variance one is the exponential distribution
used already in the Bayesian bootstrap.
It has a bounded density and also has skewness $2$.

\begin{table}
\centering
\begin{tabular}{llccccc}
\toprule
Method & Distribution & IID &$\e(v)$ & $\var(v)$ & $\skew(v)$ & $\Pr(v=0)$\\[1ex]
\midrule
Original & Multinomial & & 1 & $1-1/n$ & $\frac{1-2/n}{\sqrt{1-1/n}}$ & $(1-1/n)^n$\\[1ex]
Half sampling & Simple random& &1 & 1 & 0 & $1/2$\\[1ex]
Bayesian & $\dexp(1)$ & $\checkmark$ & 1 & 1 & 2 &0\\[1ex]
Poisson & $\dpois(1)$ & $\checkmark$ & 1 & 1 & 1 & $e^{-1}$\\[1ex]
Beta &$4\times\dbeta(1/2,3/2)$&$\checkmark$& 1 & 1 & 1 & 0\\[1ex]
Power & Equation~\eqref{eq:defpowerdistn}
& $\checkmark$ & 1 & 1 & $2(\sqrt{2}-1)$ & 0\\[1ex]
Mammen & Equation~\eqref{eq:mammenpseudocounts} 
&$\checkmark$& 1 & 1&1&0\\[1ex]
Double/nothing &$\dunif\{0,2\}$& $\checkmark$ &1 & 1 & 0 & $1/2$\\[1ex]
Lognormal & $\exp(\dnorm(-\log(2)/2,\log(2)))$ & $\checkmark$ &1&1 &4 &0\\
\bottomrule
\end{tabular}
\caption{\label{tab:pcounts}
Properties of the individual (pseudo-)counts $v_i$ in different bootstrap
weighing schemes. 
}
\end{table}

\subsection{Heavy-tailed weights}

A useful example of a heavy-tailed distribution is the log-normal distribution.
By taking $v_i = \exp( -\log(2)/2 + \sqrt{\log(2)}Z)$ for $Z\sim\dnorm(0,1)$
we get $\e(v_i)=\var(v_i)=1$.  Bootstrap $t$ ACIs with $v_i$ from this log-normal
distribution will not be second order accurate because the skewness of $v_i$
is not equal to one.  There does not seem to be a common distribution
of positive $v$ with very heavy tails and mean, variance and skewness
all equal to one.  A bootstrap $t$ with such a distribution would be second order accurate.
We will see that log-normal weights lead to very short ACIs
and low coverage.  

It is reasonable to expect low coverage for other
heavy-tailed distributions too.  
One other heavy-tailed sampler takes $\Pr(v=\sqrt{n})=1/\sqrt{n}$
with $\Pr( v=1)=1-1/\sqrt{n}$, for use in~\eqref{eq:btaci}.
This sampler is modeled on the `butcher-knife' that \cite{hest:1995} 
describes as an alternative to the jackknife.
In some limited investigations it yielded very short intervals
and it is not included in the numerical results of this paper.

\section{Properties of the beta bootstrap~$t$}\label{sec:secondorder}

Here we show that the beta bootstrap $t$ is second
order accurate as $n\to\infty$. We also 
give conditions for $\e( (t^*)^2\giv x_1,\dots,x_n)<\infty$
that apply for small $n$.
Finally, we compare the widths of $95\%$ bootstrap confidence
intervals for the case with $n=2$ and $x_1\ne x_2$,

\subsection{Second order accuracy}
We show that the bootstrap $t$ with pseudo-counts
$v_i\simiid 4\dbeta(1/2,3/2)$ satisfies a critical condition
from \cite{hall:mamm:1994} for second order accuracy.
They assume that $\e(x^{12})<\infty$ and do not impose
the Cram\'er condition~\eqref{eq:cramer}. Instead they
show the moment conditions that must be satisfied.
A Cram\'er condition is an additional requirement.

Their weights corresponding to $v_i$ are $W_i = nv_i/\sum_{i'=1}^nv_{i'}$.
That is, $W_i = nw_i$.
For second order accuracy, \cite{hall:mamm:1994} state
that it is necessary and sufficient to have both
\begin{align}\label{eq:hallmammenconds}
\e( (W_i-1)^2) = 1 + o(n^{-1/2}) 
\quad\text{and}\quad 
\e( (W_i-1)^3) = 1 + o(1)
\end{align}
along with $\e(W_i)=1$ that they earlier assumed.
We easily see that $\e(W_i) = \e(\bar W)$ by symmetry
and $\bar W=1$ where $\bar W=(1/n)\sum_{i=1}^nW_i$.
Therefore equation~\eqref{eq:hallmammenconds} gives the conditions
on $v_i$ that remain to be established.

We consider IID pseudocounts $v_i>0$  with
$\e(v)=1$, $\e(v^2)=2$ and $\e(v^3)=5$.
Then $\e((v-1)^2)=\e((v-1)^3)=1$.
Because $W_i$ are formed as ratios we have to account
for their random denominator when verifying~\eqref{eq:hallmammenconds}.
For $\ell\ge1$, we let $\mu_\ell = \e(v^\ell)$. 

\begin{theorem}\label{thm:2ndorder}
Let $v_i$ be positive IID random variables 
with $\mu_1=1$, $\mu_2=2$
and $\e(v^{-\alpha})\le M<\infty$ for some $\alpha>0$.
If $\mu_{4+\epsilon}<\infty$ for some $\epsilon>0$, then
\begin{align}
\e(W_i^2) &= 2 +\frac{10-2\mu_3}n + O(n^{-2}).\label{eq:mu2}
\end{align}
If $\mu_{6+\epsilon}<\infty$ holds for some $\epsilon>0$ then
\begin{align}
\e(W_i^3) &= \mu_3 +\frac{9\mu_3-3\mu_4}n + O(n^{-3/2}).\label{eq:mu3}
\end{align}
\end{theorem}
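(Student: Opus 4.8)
The plan is to write $W_i=nv_i/S$ with $S=\sum_{i'=1}^n v_{i'}$ and to separate out the $i$th observation by setting $U=\sum_{i'\ne i}(v_{i'}-1)$, so that $S=m_i+U$ with $m_i:=n-1+v_i$ and, crucially, $U$ independent of $v_i$. Here $U$ is a centered sum of $n-1$ i.i.d.\ unit-variance terms, so $\e(U)=0$, $\e(U^2)=n-1$, $\e(U^3)=(n-1)\e((v-1)^3)$, and in general $\e(U^b)=O(n^{\lfloor b/2\rfloor})$, odd moments thus being of reduced order. The key device is to expand only in $U$, carrying $v_i$ exactly through $m_i$:
$$
\e(W_i^k)=n^k\,\e\Bigl[v_i^k\,m_i^{-k}\,\e\bigl((1+U/m_i)^{-k}\mid v_i\bigr)\Bigr].
$$
A naive joint expansion in $(v_i-1+U)/n$ would instead generate terms like $\e\bigl(v_i^k(v_i-1)^4\bigr)$ that demand moments far beyond those assumed; routing the $v_i$ fluctuation through $m_i\ge n-1$ is what keeps the moment budget down to $\mu_{2k+\epsilon}$.

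Next I would extract the explicit terms. Expanding $(1+U/m_i)^{-k}=\sum_j\binom{-k}{j}(U/m_i)^j$ and taking the inner expectation, the independence $v_i\perp U$ factorises every term into a moment of $v_i$ times a moment of $U$, and the prefactor $n^k v_i^k m_i^{-k}=v_i^k\bigl(1+k/n-kv_i/n+O(n^{-2})\bigr)$ is itself expanded in $v_i$. For $k=2$ this gives the constant $\mu_2=2$, a prefactor contribution $\tfrac2n(\mu_2-\mu_3)=\tfrac{4-2\mu_3}{n}$, and $\tfrac6n$ from the $U^2$ term, summing to the coefficient $10-2\mu_3$; the same bookkeeping for $k=3$ yields $\mu_3$ together with $\tfrac3n(\mu_3-\mu_4)+\tfrac{6\mu_3}{n}=\tfrac{9\mu_3-3\mu_4}{n}$. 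Because $v_i\perp U$ and the odd moments of $U$ carry the reduced order above, the genuine expansion runs in integer powers of $1/n$; in particular the $U^3$ term sits at $O(n^{-2})$ and must be retained explicitly rather than folded into a crude $|U|^3$ bound.

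The main obstacle is the rigorous remainder estimate, and this is where the two sets of hypotheses do their work. Truncating the $U$-expansion at degree $d$, the moderate region $\{|U|\le m_i/2\}$ leaves a remainder of size $\e(U^{d+1})/n^{d+1}$ times a bounded factor: taking $d=3$ for the quadratic gives $O(n^{-2})$ from $\e(U^4)=O(n^2)$, whereas for the cubic the crude bound $\e|U|^3=O(n^{3/2})$ is what yields the weaker $O(n^{-3/2})$. The tail region $\{|U|\gtrsim n\}$ is controlled via the independence $v_i\perp U$ together with a Markov bound on $\e(U^{2m})=O(n^m)$, and this is precisely what forces $\mu_{4+\epsilon}$ and $\mu_{6+\epsilon}$ respectively. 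The remaining danger is the small-denominator event, where the Taylor remainder carries a factor $(S/n)^{-(k+d+1)}$ that blows up; here the assumption $\e(v^{-\alpha})\le M$ bounds the relevant inverse moments of $S$, while the deterministic inequality $W_i=nv_i/S\le n$ (valid since $S\ge v_i$) caps the integrand outright. Balancing the truncation depth against these tail and inverse-moment bounds is the crux of the argument.
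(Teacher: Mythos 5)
Your decomposition $S=m_i+U$ with $U$ independent of $v_i$ is a genuinely different route from the paper's, which instead uses exchangeability to write $\e(W_i^k)=\e(\overline{v^k}/\bar v^k)$, Taylor-expands $\bar v^{-k}$ about $1$ with a Lagrange remainder, computes the needed mixed moments $\e(\overline{v^k}(\bar v-1)^j)$ in a separate lemma, and controls the remainder by a three-factor H\"older inequality combined with a bound on negative moments of $\bar v$. Your bookkeeping of the explicit terms is correct and reproduces both coefficients, and your observation that the odd moments of $U$ are of reduced order (so the $U^3$ term is genuinely $O(n^{-2})$, not merely $O(n^{-3/2})$) is exactly what makes the $k=2$ error $O(n^{-2})$.

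The gap is in the small-denominator control, which you rightly call the crux but do not actually resolve. On the event $\{S\le n/2\}$ the cap $W_i^k\le n^k$ must be multiplied by a probability of order $n^{-k-2}$ (for $k=2$) or $n^{-k-3/2}$ (for $k=3$) to stay within the claimed error. A Markov bound using $\e(|U|^{4+\epsilon})=O(n^{2+\epsilon/2})$ gives only $\Pr(|U|\ge cn)=O(n^{-2-\epsilon/2})$, so $n^2\Pr(\cdot)=O(n^{-\epsilon/2})$, which is not $O(n^{-2})$; and the hypothesis $\e(v^{-\alpha})\le M$ yields a \emph{bounded} value for $\e((S/n)^{-p})$, hence no decay whatsoever for $\Pr(S\le n/2)$ by itself. (The benign positive tail $U\ge m_i/2$ is fine with Markov, since there $(1+U/m_i)^{-k}\le 1$ and the integrand is $O(v_i^k)$.) What rescues your argument on the dangerous side is that $\{S\le m_i/2\}$ forces $\sum_{i'\ne i}v_{i'}\le(n-1)/2$, a lower-tail event for a sum of positive mean-one i.i.d.\ variables, whose probability is $e^{-cn}$ by a Chernoff bound (the m.g.f.\ $\e(e^{-tv})$ exists for all $t\ge0$ because $v>0$). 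With that exponential bound substituted for the polynomial ones, your remainder analysis closes; as written, it does not reach the stated rates.
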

\begin{proof}
See Appendix~\ref{proof:thm:2ndorder}.
\end{proof}

The beta bootstrap $t$ of Section~\ref{sec:betabootstrap-t} satisfies
the conditions of Theorem~\ref{thm:2ndorder}.
Because it has $\mu_3=5$, it then satisfies the conditions of
\cite{hall:mamm:1994}.

\subsection{Finiteness of some RMSLs}\label{sec:nis2}

The beta bootstrap $t$ and the power bootstrap $t$
never give infinite length confidence intervals.  
The methods with $\Pr(w_i=0)>0$ give infinite length
intervals for small $n$ even with continuously distributed
data and they can have infinite length for substantially larger
$n$ when the distribution of $x_i$ has atoms.

The finite lengths for the beta and power bootstrap $t$
intervals can still be large.
Here we give conditions to ensure that $\e( (t^*)^2\giv x_1,\dots,x_n)<\infty$.
That is a stronger condition than having a finite conditional quantile.

First we suppose that there are at least three distinct $x_i$
in the sample. This holds with probability one when $x_i$
have a continuous distribution. It also holds with high probability
for some discrete distributions such as the $\dpois(1)$ distribution
where all the bootstrap $t$ methods with $\Pr( v=0)>0$
gave an infinite RMSL for $n=19$ in Section~\ref{sec:testcases}.

\begin{theorem}\label{thm:threedistinct}
Let $x_1,\dots,x_n$ have $k\ge3$ distinct observations,
and let $t^*$ be the beta bootstrap $t$ statistic.
Then $\e( (t^*)^2\giv x_1,\dots,x_n)<\infty$.
The same holds for the power bootstrap $t$ statistic.
\end{theorem}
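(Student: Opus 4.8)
The plan is to bound the random variable $(t^*)^2=n(\hat\mu^*-\bar x)^2/\hat\sigma^{*2}$ from above by something integrable in the bootstrap weights, holding $x_1,\dots,x_n$ fixed. First I would dispose of the numerator crudely: since $\hat\mu^*=\sum_i w_ix_i$ is a convex combination of the fixed data values (the $w_i$ are nonnegative and sum to one), $(\hat\mu^*-\bar x)^2$ is bounded above by the squared data range $R^2=(\max_i x_i-\min_i x_i)^2$, a constant given the data. So it suffices to show $\e(1/\hat\sigma^{*2}\giv x_1,\dots,x_n)<\infty$, with the caveat that the sharper numerator may have to be reinstated at the very end.

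Second, I would expose the structure of $\hat\sigma^{*2}$. Writing $V=\sum_i v_i$ and using the pairwise identity $\hat\sigma^{*2}=V^{-2}\sum_{i<j}v_iv_j(x_i-x_j)^2$, I would group indices by distinct value: if $z^{(1)},\dots,z^{(k)}$ are the distinct values with unnormalized group masses $P_\ell=\sum_{x_i=z^{(\ell)}}v_i$ and $S_\ell=P_\ell/V$, then $\hat\sigma^{*2}=\sum_{\ell<m}(z^{(\ell)}-z^{(m)})^2S_\ell S_m\ge \delta^2\sum_{\ell<m}S_\ell S_m$, where $\delta=\min_{\ell\ne m}|z^{(\ell)}-z^{(m)}|>0$. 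Because the $\dbeta(1/2,3/2)$ weights are almost surely positive, $\hat\sigma^{*2}>0$ almost surely, so $1/\hat\sigma^{*2}$ can only fail to be integrable through the tail regions where the normalized mass piles onto a single value and $\sum_{\ell<m}S_\ell S_m\to 0$.

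Third, I would localize to these corners. Partitioning the weight simplex by which value $z^{(p)}$ carries the plurality of the mass, on that piece one has $\sum_{\ell<m}S_\ell S_m\asymp S_p(1-S_p)\asymp 1-S_p=U_p/V$, where $U_p=\sum_{x_i\ne z^{(p)}}v_i$ is the sum of the $m_p=n-g_p$ off-value pseudo-counts ($g_p$ the multiplicity of $z^{(p)}$). Since $v_i\le 4$ keeps $V$ bounded and $S_p$ is bounded below on this piece, $\hat\sigma^{*2}\gtrsim c\,U_p$, reducing matters to reciprocal-moment bounds $\e(1/U_p)<\infty$. Here the singular weight density does the work: because $f(v)\sim C v^{-1/2}$ as $v\to0$, the sum $U_p$ of $m_p$ IID copies has density of order $s^{m_p/2-1}$ near the origin, so $\e(1/U_p)$ has integrand of order $s^{m_p/2-2}$ and is finite exactly when $m_p\ge3$. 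The role of the three-distinct-values hypothesis is to guarantee enough off-value observations at the corner that matters.

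The step I expect to be the main obstacle is precisely this last one, because the exponent $a=1/2$ places every such reciprocal moment on the boundary of integrability, so the bookkeeping must be exact rather than order-of-magnitude. The worst corner is the one dominated by the largest group, where $m_p=n-g_p$ is smallest, and the genuinely delicate case is $m_p=2$, where the naive estimate yields a logarithmically divergent $\int_0 s^{-1}\,\mathrm{d}s$ and the crude numerator bound $R^2$ no longer suffices. To close that corner I would have to retain the numerator $(\hat\mu^*-\bar x)^2$, which degenerates to $(z^{(p)}-\bar x)^2$ there and vanishes when the dominating value coincides with $\bar x$, and match its order of vanishing against that of $\hat\sigma^{*2}$. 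Carrying out this corner-by-corner comparison, and checking that the $k\ge3$ structure really does control every corner, is where I expect the real effort to concentrate; the power bootstrap $t$ should then follow along an identical route with $a=\sqrt2-1$ in place of $1/2$.
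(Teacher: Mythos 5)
Your route is genuinely different from the paper's. The paper does no localization at all: it selects one index for each of the $k$ distinct values to form a set $S$, lower-bounds the denominator $\tfrac12\sum_{i,j}v_iv_j(x_i-x_j)^2$ by $\tfrac{\delta^2}{2}\sum_{i\ne j\in S}v_iv_j$ with $\delta$ the minimal gap, applies AM--GM to the $k(k-1)$ products to pull out a power of $\prod_{i\in S}v_i$, and finishes by independence with a single negative moment $\e(v^{-\beta})^k$, which is finite for the $\dbeta(1/2,3/2)$ density whenever $\beta<1/2$. That is far shorter than your corner-by-corner analysis and never touches the multiplicities $g_p$. Your decomposition, reciprocal-moment computation for $U_p$ (finite exactly when $m_p\ge3$), and identification of $m_p=2$ as the critical case are all correct as far as they go.

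The genuine gap is the one you flagged yourself, and it cannot be closed the way you hope. When $k=3$ and some value $z^{(p)}$ has multiplicity $n-2$ (in particular for any $n=3$ sample with three distinct points), the corner where the two off-value pseudo-counts $v_a,v_b$ tend to zero has $\hat\mu^*\to z^{(p)}$, so the numerator tends to $(z^{(p)}-\bar x)^2$, which is \emph{generically nonzero}; it vanishes only in the measure-zero coincidence $z^{(p)}=\bar x$. On the positive-probability event $\{v_a,v_b\le\epsilon\}$ intersected with the rest of the mass being order one, $(t^*)^2\gtrsim 1/(v_a+v_b)$, and your own density calculation shows this has a logarithmically divergent expectation. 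So completing your argument honestly proves the conclusion whenever every distinct value is absent from at least three observations (automatic for $k\ge4$, since $m_p\ge k-1$), but refutes it for the $(n-2,1,1)$ configuration. The discrepancy with the paper traces to its AM--GM display: the geometric mean of the $k(k-1)$ terms $v_iv_j$ is $\prod_{i\in S}v_i^{2/k}$ (degree two, matching the sum), not $\prod_{i\in S}v_i^{1/k}$, and with the exponent $2/k$ the required moment $\e(v^{-1/2-2/k})$ is finite only for $k\ge5$. Your localization is therefore the right tool here: the remaining work is not to close the $m_p=2$ corner but to recognize that it is a genuine counterexample and to strengthen the hypothesis accordingly.
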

\begin{proof}
See Appendix~\ref{sec:proof:thm:threedistinct}
\end{proof}

More generally if $v_i$ are IID $\dbeta(a, (a^2+a)/(1-a))$,
for $0<a<1$ and there are more than $1/a$ distinct observations $x_i$
in the sample, then $\e( (t^*)^2\giv x_1,\dots,x_n)<\infty$
holds by the argument behind Theorem~\ref{thm:threedistinct}.

\begin{theorem}\label{thm:pairofthrees}
Let there be two distinct observations among $x_1,\dots,x_n$,
each appearing at least three times.  Then for the beta bootstrap $t$,
$\e( (t^*)^2\giv x_1,\dots,x_n)<\infty$.
The same holds for the power bootstrap $t$ statistic.
\end{theorem}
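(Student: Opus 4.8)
The plan is to exploit the fact that with only two distinct data values the bootstrap $t$ statistic collapses to an explicit function of two \emph{independent} sums of pseudo-counts, and then to reduce the claim to a negative-moment bound on such a sum.

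Let the two distinct values be $y$ and $z$, appearing $m\ge3$ and $n-m\ge3$ times, and set $p=m/n$ so that $\bar x = z + (y-z)p$. Write $S_y=\sum_{i:x_i=y}v_i$ and $S_z=\sum_{i:x_i=z}v_i$; these are independent, being sums over disjoint blocks of the IID pseudo-counts. The group weights are $W_y=S_y/(S_y+S_z)$ and $W_z=1-W_y$. First I would record the two-point identities $\hat\mu^*-\bar x=(y-z)(W_y-p)$ and $\hat\sigma^{*2}=(y-z)^2W_yW_z$, the latter because for a two-point set the weighted variance equals the squared gap times the product of the masses. Substituting into $t^*=\sqrt n(\hat\mu^*-\bar x)/\hat\sigma^*$, the factors of $(y-z)$ and of $(S_y+S_z)$ cancel, leaving
\begin{align*}
(t^*)^2 = n\,\frac{[(1-p)S_y-pS_z]^2}{S_yS_z}
= n\Bigl((1-p)^2\frac{S_y}{S_z}+p^2\frac{S_z}{S_y}-2p(1-p)\Bigr).
\end{align*}
Because $W_y,W_z\in(0,1)$ almost surely (the weights are strictly positive), $\hat\sigma^*>0$ a.s., so $t^*$ is finite a.s.\ and no $\hat\sigma^*=0$ convention is triggered; only integrability of $(t^*)^2$ is in question.

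Taking the conditional expectation and using independence of $S_y$ and $S_z$, the problem factors: $\e(S_y/S_z)=\e(S_y)\,\e(1/S_z)$ and $\e(S_z/S_y)=\e(S_z)\,\e(1/S_y)$. Since $\e(v)=1$ we have $\e(S_y)=m$ and $\e(S_z)=n-m$, both finite, and the cross term is a constant. The whole claim therefore reduces to showing $\e(1/S_y)<\infty$ and $\e(1/S_z)<\infty$, that is, that a sum $S=\sum_{i=1}^k v_i$ of $k\ge3$ IID beta (or power) pseudo-counts has a finite reciprocal moment.

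This last step is the heart of the matter. The plan is to write $\e(1/S)=\int_0^\infty \e(e^{-tS})\rd t=\int_0^\infty L(t)^k\rd t$ with $L(t)=\e(e^{-tv})$, which is valid by Tonelli since the integrand is nonnegative. Near $t=0$ we have $L(t)\le1$, so $\int_0^1 L(t)^k\rd t\le1$. The real work is the tail $t\to\infty$: the pseudo-count density behaves like $v^{a-1}$ as $v\to0^+$ (with $a=1/2$ for the beta weights and $a=\sqrt2-1$ for the power weights), and by a Watson's-lemma / Abelian argument this forces $L(t)=O(t^{-a})$ as $t\to\infty$. Hence $L(t)^k=O(t^{-ak})$ and $\int_1^\infty t^{-ak}\rd t<\infty$ exactly when $ak>1$, i.e.\ $k>1/a$. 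For both weight families $1/a<3$, so $k\ge3$ suffices — which is precisely the ``appearing at least three times'' hypothesis, and it matches the $k>1/a$ threshold already seen in Theorem~\ref{thm:threedistinct}. The main obstacle is thus pinning down the $O(t^{-a})$ decay of $L(t)$ rigorously from the singular density; everything preceding it is algebra and a factorization through independence.
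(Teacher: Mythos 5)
Your proof is correct, and it reaches the paper's conclusion by a partly different route. The reduction is essentially the paper's own: with two distinct values the weighted mean and variance collapse to functions of the block weight $W_y=S_y/(S_y+S_z)$, the factors of $(y-z)$ and of $(S_y+S_z)$ cancel, and the whole problem reduces to showing $\e(1/S)<\infty$ for $S$ a sum of $k\ge3$ IID pseudo-counts. (The paper simply bounds $(\lambda^*-\lambda)^2\le1$ to get $(t^*)^2\le n(1/u_1+1/u_n+2)$ rather than expanding the square and invoking independence of the two block sums as you do; your version is an exact identity, but the difference is cosmetic.) Where you genuinely diverge is the negative-moment bound, which you correctly identify as the heart of the matter. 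The paper applies the AM--GM inequality $S\ge k\bigl(\prod_{i}v_i\bigr)^{1/k}$, so $\e(1/S)\le k^{-1}\e(v^{-1/k})^k$, which is finite exactly when $a>1/k$; this reuses the mechanism of Theorem~\ref{thm:threedistinct} verbatim. You instead write $\e(1/S)=\int_0^\infty L(t)^k\rd t$ with $L(t)=\e(e^{-tv})$ and need $L(t)=O(t^{-a})$. The step you flag as the main obstacle is in fact immediate and requires no Watson's-lemma asymptotics: both weight densities satisfy $f(v)\le Cv^{a-1}$ on their (bounded) support, so $L(t)\le C\int_0^\infty e^{-tv}v^{a-1}\rd v=C\Gamma(a)t^{-a}$ for all $t>0$, and $\int_1^\infty t^{-ak}\rd t<\infty$ whenever $k>1/a$, i.e.\ $k\ge3$ for $a=1/2$ and for $a=\sqrt2-1$. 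Both routes give the same sharp threshold $k>1/a$; yours makes the origin of that threshold (the decay rate of the Laplace transform of a single weight) more transparent and would also yield the converse with a matching lower bound on $L(t)$, while the paper's AM--GM argument is more elementary and unifies the two finiteness theorems under a single inequality.
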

\begin{proof}
See Appendix~\ref{sec:proof:thm:pairofthrees}.
\end{proof}

Similarly to the prior theorem, it suffices to have
more than $1/a$ copies of each of the two distinct
observations in order to get
$\e( (t^*)^2\giv x_1,\dots,x_n)<\infty$.

\subsection{Width when $n=2$}
For $n=2$, with $x_1\ne x_2$, the proof of Theorem~\ref{thm:pairofthrees}
leads to the expression
\begin{align}\label{eq:tstar}
t^* = \sqrt{2}\frac{w_1-1/2}{\sqrt{w_1(1-w_1)}}\sign(x_1-x_2).
\end{align}
Because $w_1=v_1/(v_1+v_2)$,
this allows us to compute the quantiles of $t^*$ and compare 
the different bootstrap $t$
methods to each other, to Student's $t$ and to \bca.
We can express all of the interval lengths as multiples of $|x_1-x_2|$.

For Student's $t$ with  $n=2$ we have $s^2 = (x_1-x_2)^2/2$.
The usual Student's $t$ interval reduces to
$$
\bar x \pm t^{0.975}_{(1)}\frac{s}{\sqrt{2}}=
\bar x \pm t^{0.975}_{(1)}\frac{|x_1-x_2|}2.
$$
The $t_{(1)}$ distribution is the Cauchy distribution, and we
get $t^{0.975}_{(1)}\doteq 12.71$.  For the very benign case where
$x\simiid \dnorm(\mu,\sigma^2)$, the exact confidence interval
has width $12.71|x_1-x_2|$ when $n=2$.

For $n=2$, the weighted bootstrap $t$ interval widths
are $\hat\sigma = |x_1-x_2|/2$ times
the $0.95$ quantile of the distribution of
$2|w_1-1/2|/\sqrt{w_1(1-w_1)}$.  That is, the width
is the $0.95$ quantile of
$|w_1-1/2|/\sqrt{w_1(1-w_1)}$.  
For the usual multinomial bootstrap $t$, $w_1$ takes values
$0$, $1/2$ and $1$ with probabilities $1/4$, $1/2$
and $1/4$ respectively. Then the width
takes values $0$ and $\infty$ with probability
$1/2$ each and the $95$'th percentile of this distribution is $\infty$.
The Poisson bootstrap $t$, double-or-nothing bootstrap $t$
and half-sampling bootstrap $t$ all give infinite intervals for $n=2$ with $x_1\ne x_2$.

For the \bca\ with $n=2$, the formula in Section~\ref{sec:bca} 
yields $[x_{(1)},x_{(1)}]$ where $x_{(1)}= \min(x_1,x_2)$.
While $\Pr( \bar x^*=\bar x)$ is negligible for modestly
large $n$, it equals $1/2$ for $n=2$.
If we modify equation~\eqref{eq:defz0} to put
half weight on cases where $\bar x^*=\bar x$, then
we get a 95\% interval of $[x_{(1)},x_{(2)}]$
which is a truer reflection of how \bca\ operates.
The numerical results in this paper use that modification.
When $\bar x^*=\bar x$ arises because
the same values were averaged in both cases,
they will also be equal in their floating point representations.

Table~\ref{tab:bootfactor} shows width factors for various
ACIs when $n=2$.  The bootstrap ones were computed numerically
by sampling values of $w_1=v_1/(v_1+v_2)$.  
What we see there is that most of the bootstrap ACIs
we consider are either narrower than Student's $t$ intervals 
or have infinite length when $n=2$.  It is reasonable for
a nonparametric interval to be wider than one based on a Gaussian
data assumption.  Of the methods in Table~\ref{tab:bootfactor},
only the power bootstrap $t$
has intervals wider than Student's $t$ but of finite length.

\begin{table}[t]
  \centering
  \begin{tabular}{lc}
\toprule
Method & Width$/|x_1-x_2|$\\
\midrule
Multinomial & $\infty$\\
Power & 18.60\\
Student & 12.71\\
Beta   & 10.78 \\
Bayesian &\phz3.04\\
Log normal& \phz1.43\\
Mammen & \phz1.12\\
\bca & \,1\phantom{11}\\
\bottomrule
  \end{tabular}
  \caption{\label{tab:bootfactor}
    This table shows widths of $t$-based confidence intervals
    at nominal level 95\%,
    when $n=2$ as a multiple of $|x_1-x_2|$.
    Multinomial is the usual bootstrap $t$.  Student is the
    usual Student's $t$ interval.  The others are weighted
    bootstraps.
    }
\end{table}


\section{Towards a common task framework for ACIs}\label{sec:commontask}


The considerable progress in black box prediction and natural language
processing has been attributed to the common task framework
epitomized by DARPA's challenge problems.
That framework includes standard benchmark problems on which to
compare objectively measurable performance such as predictive accuracy.  
In those settings, state of the art performance need not always be
accompanied by a full explanation of why a method works well.
See \cite{dono:2024} who points to a lecture by Mark Liberman for the term.

There is no widely accepted measure
of performance for ACIs, so there is no accepted benchmark
with which to judge state of the art performance.
The main difficulty is that we have two competing objectives: attaining something
close to the nominal coverage level, and having intervals that are as short as possible.
Sometimes one method has better coverage than another along with shorter
intervals. Then that other method is inadmissible.  Such domination is
rare among seriously considered methods, and they span a large Pareto
frontier.

Confidence intervals are similar to prediction intervals and also
to quantile elicitation, where there are established criteria.
Binary classification has an established $F_1$ score to make a similar tradeoff
between precision and recall.
There are also decision theoretic methods
to measure performance of confidence intervals.  
We see below how none of these concepts provide a good benchmark for ACIs.

\subsection{Calibration}
For both confidence and prediction intervals,
we provide an ideally short interval $[L,U]$ and there is a desired
nominal coverage probability $\alpha$. 
Here $L=L(X)$ and $U=U(X)$ where $X$ represents all the data.
In a prediction interval we seek $\Pr( L\le Y\le U\giv X) =\alpha$
for random $Y$ conditionally on $X$.  In a confidence interval
we seek $\Pr( L(X) \le \theta\le U(X))$ for a non-random quantity
$\theta$ that is a function of the distribution of $X$, such as
the mean $\mu$. The probability is under randomess in $X$.

Prediction intervals can be calibrated on an exchangeable set
of problem instances in order to get close to the nominal
coverage. For a recent survey of calibration
based on conformal inference, see \cite{ange:bate:2023}.
There, in an exchangeable sequence of $N$ prediction problems
it is possible to calibrate so that the coverage of the $n$'th
one is between $1-\alpha$ and $1-\alpha+1/(n+1)$.
We can compare well-calibrated prediction intervals by preferring shorter
ones using mean length or some other aspect of 
the distribution of their interval lengths.

Confidence intervals are harder to calibrate than prediction
intervals.  We don't generally observe the true parameter value.  
We can calibrate them numerically for specific hypothesized
distributions of $x_i$.
We usually lack a way to show that our problem is 
exchangeable within some set of hypothesized problems
that we can calibrate.

\subsection{Elicitation and the interval score}

Continuing the comparison between confidence intervals and prediction
intervals, the one-sided version of a prediction interval is a quantile
$Q^\alpha$ satisfying $\Pr( x\le Q^\alpha)=\alpha$, which for simplicity
we will assume is unique.
Quantiles minimize the expected value of a piecewise linear `pinball' score function,
familiar from quantile regression \citep{koen:2005}, given by
\begin{align}\label{eq:quantilescore}
S_\alpha(\theta;x) = (x-\theta)\bigl( \alpha- 1_{x<\theta}\bigr).
\end{align}
The minimizer over $\theta$ of $\e(S_\alpha(\theta;x))$ is the
$\alpha$ quantile of $x$ very generally.  
\cite{gnei:raft:2007} describe a requirement that either
$\e( \max(x,0))<\infty$ or $\e( \max(-x,0))<\infty$,
both of which are satisfied if $\e(|x|)<\infty$.

The literature on scoring mixes settings where the score function is
a utility to be maximized with those where it is a loss to be minimized.  We will use
losses, translating some of the results for utilities to this case.

A scoring function $S$ is consistent for a real-valued functional $T$ of the distribution $F$
relative to the class $\cf$ of distributions if
$\e_F(S(\theta,x))\ge\e_F(S(T(F),x))$ for all $F\in\cf$ and all  $\theta\in\real$ when $x\sim F$.
It is strictly consistent if equality holds only when $\theta=T(F)$.
If $T(\cdot)$ has a strictly consistent scoring function, then it is elicitable
(relative to the class $\cf$). For the distinction between consistent
score functions and the closely related proper score functions, see \cite{gnei:2011}.

Quantiles are elicitable because
$S_\alpha$ above is a consistent score function.
This function can be used
to elicit an expert's opinion of the quantile of $x$.
When $\e(\cdot)$ is with respect to that expert's opinion
of the distribution of $x$, then their best choice is to
answer with their true opinion about $Q^\alpha$
instead of, for example, cautiously over-estimating the probability
of some undesired outcome (such as rain or financial loss or illness). 
Any such attempts to game the problem can only worsen the expert's score.

We can form a prediction interval from two quantiles such
as $Q^{\alpha_j}$ for $j=1,2$.
For a two-sided symmetric prediction interval at
level $\alpha$, we can take $\alpha_1=(1-\alpha)/2$
and $\alpha_2=1-\alpha_1$ and sum the scores getting 
\begin{align*}
S_{\alpha_1}(x,\theta_1)+S_{\alpha_2}(x,\theta_2)
&=(\alpha_1-1_{x\le\theta_1})(x-\theta_1)
+(1-\alpha_1-1_{x\le\theta_1})(x-\theta_2)\\
&= \alpha_1(\theta_2-\theta_1)+1_{x\le\theta_1}(\theta_1-x)+1_{x>\theta_2}(x-\theta_2).
\end{align*}
The interval length between the upper and lower quantiles is $\theta_2-\theta_1$.
If we divide the score above by $\alpha_1$, we get the interval score
\begin{align}\label{eq:tgscore}
\theta_2-\theta_1+\frac{1_{x<\theta_1}}{\alpha_1}(\theta_1-x)+\frac{1_{x>\theta_2}}{\alpha_1}(x-\theta_2)
\end{align}
due to \cite{wink:1972}.
The above derivation uses $1_{x\le\theta_1}(\theta_1-x)=1_{x <\theta_1}(\theta_1-x)$.
Equation~\eqref{eq:tgscore} is equation (43) of \cite{gnei:raft:2007}.  
Their $\alpha$ is our $1-\alpha$, so for the default
coverage setting, our $1/\alpha_1=40$ is their $2/\alpha=40$.

When scoring confidence intervals $[L,U]$ for the mean $\mu$, if we reverse the
notion of what is random, as if $\mu$ were random with estimated quantiles
$L$ and $U$, then the Winkler score~\eqref{eq:tgscore} yields
\begin{align}\label{eq:tgciscore}
U-L+\frac{1_{\mu<L}}{\alpha_1}(L-\mu)+\frac{1_{\mu>U}}{\alpha_1}(\mu-U).
\end{align}
This measure combines the length $U-L$ of the interval
with some non-coverage penalties that grow with the distance between $\mu$
and the interval $[L,U]$. 
This measure was proposed in the masters thesis \cite{hofe:2022} 
and used in \cite{hofe:held:2022} to score confidence
intervals for the mean $\pi$ of a Bernoulli distribution. They find
that Wald intervals perform badly, and for $\pi$ not near
$0$ or $1$, the best results were from a Wilson confidence
interval or a Bayesian credible interval using a uniform prior.

Scoring a confidence interval the way we would score a prediction
interoval involves reversing the role of what is fixed with what is random.
Statisticians are very used to this when alternating between frequentist
and Bayesian viewpoints of confidence/credible intervals. So it is
surprising that the proposal in \cite{hofe:held:2022} is so recent.

An upper confidence interval function would be a statistic
$T_n(x_1,\dots,x_n)$ with the property that
$$
\Pr\bigl( \e(x_1) \le T_n(x_1,\dots,x_n)\bigr)=\alpha
$$
for $x_i\simiid F$.  By the result of \cite{baha:sava:1956} we know
that no non-trivial such $T_n$ exists.  
Perhaps we should not expect to elicit a statistical quantity that does not exist.
Some quantities that are closely related to confidence intervals
are known to not be elicitable.
\cite{fron:kash:2014} call the interval $(a,b)$ a 90\% confidence
interval for $x$ if $\int_a^bp(x)\rd x=0.9$.  That does not correspond
to our usage of the term `confidence interval'.  It is a prediction
interval.  They go on to show that the entire set of such pairs
$(a,b)$ is not elicitable, nor is the pair with the smallest
width $\omega=b-a$, nor is $\omega$ itself elicitable.
For more in this direction, see \cite{breh:gnei:2021}.
From here on, we assume that there is no consistent score function
for non-trivial  confidence intervals.

\subsection{Decision theory}\label{sec:decisiontheory}

There has been some work on decision theoretic frameworks for confidence intervals
and set estimation more generally.  Good starting points for that literature are
\cite{case:hwan:1991} and \cite{case:hwan:robe:1994}.
Those works are for a parametric setting, which is understandable due to the
non-existence of fully nonparametric confidence intervals for the mean.
The most prominent loss functions take a form equivalent to
\begin{align}\label{eq:usualloss}
\lambda\times (U-L)-1\{L\le\mu\le U\}
\end{align}
for some $\lambda>0$.  Authors vary in whether the factor
$\lambda$ is applied to the interval length or the coverage indicator.
Then the risk is
\begin{align}\label{eq:cirisk}
\lambda\times\e( U-L) -\Pr(\mu\not\in[L,U]),
\end{align}
trading off expected length and coverage probability.
The great difficulty is in choosing the value for $\lambda$.
If we are comparing methods that achieve $\Pr(\mu\not\in[L,U])=\alpha$
then any $\lambda>0$ leads us to prefer shorter mean length.  When
we have to compare two or more imperfectly calibrated methods,
as we must for ACIs, then it is very hard to choose the constant~$\lambda$.

A more serious difficulty is that if we fix $\lambda$ and then minimize
risk, we are also effectively choosing the level $\alpha$ for our interval.
When we are reporting a confidence interval or plotting error bars
we want to directly specify and communicate the nominal level.
It would not be satisfactory for the coverage level $\alpha$ 
corresponding to some $\lambda$
to be different from our nominal $\alpha$. It would be even worse to communicate
intervals whose nominal level was unknown.

\cite{case:hwan:robe:1994} describe a paradox told to them by James
Berger.  When using the loss~\eqref{eq:usualloss} on Gaussian
data, the usual Student's $t$ interval can be dominated
by a strategy that truncates the interval to length $0$
when the sample standard deviation $s$ is large enough.
The paradox can be removed through the use of an increasing scale function
$S:[0,\infty)\to[0,1]$ and then  using
\begin{align}\label{eq:newloss1}
S( U-L)-1\{L\le\mu\le U\},
\end{align}
where $S$ could incorporate a scaling factor like $\lambda$.

For ACIs, the coverage error is typically $O(1/n)$ while
the mean length is typically $O(1/\sqrt{n})$.
Then if $\lambda$ is reasonable for one value of $n$
it is not suitable for other values of $n$.
We can make the choice of $\lambda$ less dependent on $n$
by either penalizing the mean squared length, or by using
\begin{align}\label{eq:newaciloss}
\lambda\frac{U-L}{\sqrt{n}\sigma}-1\{L\le\mu\le U\}.
\end{align}
Scaling by $\sigma$ makes the performance invariant to
rescaling the data $x_i$ to $cx_i$ for $c>0$ when, as is usual,
coverage is also invariant to such rescaling.
In numerical examples, we know $\sigma$ and can then
use it in comparisons of methods.

It is not obvious that an interval becomes better
when its coverage level exceeds the nominal level.
Over-estimating the coverage of the interval $[L,U]$
necessarily under-estimates the coverage of $(-\infty,L)\cup(U,\infty)$.
For 95\% coverage, criteria like
\begin{align}\label{eq:ciriskalt1}
\lambda\frac{\e( (U-L)^2)^{1/2}}{\sigma\sqrt{n}} -|\Pr(\mu\not\in[L,U])-0.95|,
\end{align}
or
\begin{align}\label{eq:ciriskalt2}
\lambda\frac{\e( (U-L)^2)^{1/2}}{\sigma\sqrt{n}} -\min\bigl(0.95,\Pr(\mu\not\in[L,U])\bigr)
\end{align}
explicitly account for the target coverage level, unlike~\eqref{eq:cirisk}.
Using the root mean squared length (RMSL) in these criteria
penalizes methods that produce highly variable interval lengths.

The measure in~\eqref{eq:ciriskalt1} penalizes both under-coverage
and over-coverage.
The measure in~\eqref{eq:ciriskalt2} does not explicitly
penalize over-coverage, but does not also reward it.
Confidence interval constructions usually have a parameter
such as $t_{(n-1)}^\alpha$ in Student's $t$ that can be
raised or lowered to trade off length and coverage.
Methods that over-cover are then methods that might otherwise
have had nominal coverage at lower length, e.g., smaller $t^\alpha$. 
Their over-coverage
is then already penalized by that extra interval length.
Neither of the above criteria
can be written as values of expected loss functions. In that sense
they are only `quasi-risks', not risks.
In any of the criteria~\eqref{eq:cirisk}, \eqref{eq:ciriskalt1} and \eqref{eq:ciriskalt2}
it remains unclear  how one should choose~$\lambda$.

\subsection{The $F_1$ score}

In binary classification problems, some methods may have better
precision while others have better recall.  These two desiderata conflict in a 
very similar way to how the criteria for ACIs do.  
The $F_1$ score is commonly used to combine them into one criterion.
The context is that of predicting a binary label $y\in\{0,1\}$ where items with $y=1$
are rare and interesting to discover, while $y=0$ is the norm.
This is similar to the problem of choosing a confidence interval
where we must trade off two competing criteria.
And yet, the $F$-score described next is a very popular, though not universally popular,
way to do it.
An early context was information retrieval \citep{vanr:1979} where $y=1$
describes a relevant document and earlier uses were in biology \citep{dice:1945,sore:1948}.

Given estimates $\hat y\in\{0,1\}$ the precision 
is $p = \Pr(y=1\giv\hat y=1)$ and the recall is
$r = \Pr( \hat y=1\giv y=1)$.
These are typically estimated using empirical probabilities
over a set of known $(y,\hat y)$ pairs.
The $F_1$ score is the harmonic mean
$$
F_1 = \frac1{(1/p+1/r)/2} = \frac{2pr}{p+r}
$$
and it is widely used as a single measure of classifier accuracy.
For some debates about its suitability see \cite{hand:chri:kiri:2021}
and for a listing of many alternatives, see \cite{powe:2020}.
The $F_1$ score is preferred to the overall accuracy $\Pr(\hat y=y)$ because the latter can be
very high for the completely uninformative rule with $\Pr(\hat y=0)=1$.
The more general $F_\beta$ score given by
$$
F_\beta = \frac{\beta^2+1}{\beta^2/r+1/p}
$$
puts more emphasis on recall when $\beta>1$ and more
emphasis on precision when $\beta <1$.

We can draw an analogy between binary classification and confidence
intervals.  The true value $\mu$ is the one real number that we wish
to recover.  That makes coverage probability the analogue of recall.
Taking $\tilde r = \max( \Pr( L\le\mu\le U),0.95)$ allows us to
incorporate a specific nominal coverage level, which we were not
able to do with the Winkler score.  This criterion is designed to
reward absolute coverage (up to a limit) instead of penalizing
coverage error.

For precision, a shorter interval corresponds to greater precision.
We want something proportional to $1/\rmsl$. We choose
$\tilde p = \sigma/(\sqrt{n}\,\rmsl)$. The scaling by $\sigma$
makes the criterion invariant to rescaling $x_i$ and this is appropriate
because $\tilde r$ is invariant. Where coverage error decreased
as $1/n$, absolute coverage approaches the nominal level and
our scaling by $\sqrt{n}$ makes $\tilde p=O_p(1)$.
With these choices
\begin{align}\label{eq:tildef}
\tilde F_\beta = \frac{\beta^2+1}{\beta^2/\max(\Pr(L\le\mu\le U),0.95)+\sqrt{n}\rmsl/\sigma}.
\end{align}
To get a $\tilde F_\beta$ score suitable for ACIs we would need a consensus
choice for $\beta$.

\subsection{Some other criteria}

\cite{ci4rqmc} prioritized coverage over length.  For a nominal 95\% coverage
any method delivering below 94\% coverage would be considered a failure.
If a method did not cover the true mean at least 927 times out of
1000, then it was deemed to have failed.  A false positive would happen with
less than 4\% probability, by the binomial distribution.
That large study of randomized quasi-Monte Carlo integration featured
2400 tasks from 6 integrands, 5 RQMC methods, 4 integral dimensions,
5 RQMC sample sizes and 4 choices for the number of RQMC replicates to use.
The percentile bootstrap failed 1698 times, the bootstrap $t$ failed
81 times, and the plain Student's $t$ interval failed only 3 times.

The RQMC setting is special.  Recent literature is showing that
the integral estimates are nearly symetrically distributed with
large kurtosis. For one kind of RQMC, the diverging kurtosis follows
from results in \cite{superpolyone} while the modest skewness has
been studied in \cite{lowskewness}.
It is well known that the standard Student ACIs attain
high coverage, often over the nominal level, 
for data that come from a symmetric distribution with
very heavy tails.  See \cite{mill:1986}, \cite{efro:1969} and \cite{cres:1980}.

\cite{owen:smallci}
compared nonparametric ACIs for the
mean. Portions of that work appeared in \cite{elsmall}.
In one of the comparisons each method chose an ACI
and then each other method selected its ACI
using the same length that the first method chose.  That allows us
to compare methods by a measure of length adjusted coverage.
In that framework, the empirical likelihood intervals
generally did best.  They were not however as well calibrated as bootstrap $t$.
Improving the calibration of empirical likelihood intervals is outside
the scope of this article.

\cite{wolf:1950} described an interesting way to modify the length
of a confidence interval in a loss function.
When an interval does not cover $\mu$, he extends it until
it does and then uses 
the length of the resulting interval $[\min(U,\mu),\max(L,\mu)]$.
In some numerical work this did not make much difference.
That can be explained by non-coverage being relatively rare
and the distance between the interval and $\mu$ being $O_p(n^{-1/2})$.

\subsection{Illustration}

To illustrate the difficulties in quantifying ACI quality, consider ACIs for the mean
of the $\dexp(1)$ distribution based on $x_1,\dots,x_{10}\simiid\dexp(1)$.
This distribution is only a  moderately difficult one from
Section~\ref{sec:testcases}.
The sample size $n=10$ is near the middle of the range $2\le n\le20$
considered there.

The true mean is $\mu=1$ and the mean coverage of any given ACI is now a $10$
dimensional integral.  We can approximate that integral using
$10{,}000$ vectors $X\in\real^{10}$ with IID $\dexp(1)$ components.
The results are in Table~\ref{tab:coverexpo10} for bootstrap
methods based on $B=2000$ resamples, sorted in decreasing
order by coverage.
If we go by coverage and average length, then all of the methods
are on the Pareto frontier, except for the half sampling
bootstrap $t$, which has greater mean length and lower coverage 
than the Poisson bootstrap $t$.

\begin{table}[t]\centering
\begin{tabular}{lrrrrrr}
\toprule
  Method & Cover\hspace*{.75mm} & Length\hspace*{-2mm} & Length$^2$\hspace*{-2.5mm} & Win95\hspace*{-1mm} & Win99\hspace*{-1mm} & $\tilde F_1$\hspace*{2mm}\\
\midrule
U$\{0,2\}$ & 0.9881 & 4.70 & 44.58 & 4.77 & 5.04 & 0.090\\
Poisson & 0.9612 & 2.26 & 8.07 & 2.49 & 3.40 &0.200\\
Half Sample & 0.9577 & 2.37 & 9.37 & 2.62 & 3.60 &0.187 \\
Multinomial & 0.9415 & 1.87 & 5.40 & 2.22 & 3.62 &0.238\\
Power & 0.9400 & 1.62 & 3.32 & 1.98 & 3.40 & 0.293 \\
Beta($1/2,3/2$) & 0.9311 & 1.52 & 2.90 & 1.94 & 3.59 & 0.311\\
Student's t & 0.9002 & 1.33 & 2.07 & 1.96 & 4.50 & 0.357 \\
Exponential & 0.8935 & 1.23 & 1.84 & 1.90 & 4.58 & 0.374\\
\bca & 0.8772 & 1.17 & 1.64 & 1.92 & 4.95 & 0.392\\
Mammen & 0.8681 & 1.12 & 1.43 & 1.96 & 5.36 &0.413\\
Lognormal & 0.8405 & 1.01 & 1.20 & 2.04 & 6.17&0.443 \\
\bottomrule
\end{tabular}
\caption{\label{tab:coverexpo10}
  This table describes $95$\% ACIs for a sample of $n=10$
  draws from $\dexp(1)$. All the methods are weighted
  bootstrap $t$ except for \bca\ and Student's $t$.
  The columns are mean coverage, mean interval
  length, mean squared interval length, two Winkler scores, one at
  level  $0.95$   and one at level $0.99$, and $\tilde F_1$ from~\eqref{eq:tildef}.
}
\end{table}

If we use the Winkler score for level $0.95$
then the best score comes from a bootstrap $t$ using
Rubin's Bayesian bootstrap, i.e., exponential weights.
It does not seem reasonable that a method with
$89.35$\% coverage should be best when other methods
attain 93\% to 96\% coverage.  Those other methods
use longer intervals, but they do not appear to have drastically
greater length.

Sometimes experts making a prediction might make a prediction
that is more cautious than their true belief if the score function
is not proper. In a setting with no proper score function like
ACIs appear to be, we might judge that the 95\% Winkler
score is too lenient on coverage (versus length) and compensate
by scoring nominally 95\% ACIs by a higher coverage goal
such as 99\%.  By that measure we would favor the bootstrap $t$
with power weights.  It raises the coverage from just over 89\%
to 94\% while increasing the mean length by a factor
of $1.62/1.23\doteq 1.32$ and the mean squared length
by $3.32/1.84\doteq 1.80$.

Poisson weights attain essentially the
same score as power weights do here.  They do so by
getting credit for coverage that goes beyond even 95\%.
While it may or may not be reasonable to penalize a method for
covering more than the nominal level, it does not seem
reasonable to credit it for covering more than the nominal
level.  This breaks the Winkler score tie in favor of the power
bootstrap $t$.

If we insist on at least 95\% coverage and then minimize
mean or mean squared length subject to that constraint, then Poisson weights
did best for this case. 
If we go by the largest values of $\tilde F_1$ then the favored methods
would be the ones that most severely under-cover $\mu$.
In other words, we are not fortunate enough to have a setting where $\beta=1$
provides a reasonable default.  For other values of $\beta$ $\tilde F_\beta$ generally
favors either the longest or the shortest measures.  It is essentially picking one
criterion and neglecting the other.

The author's subjective judgement of the results in Table~\ref{tab:coverexpo10}
is as follows.  The multinomial, power and beta bootstrap $t$ methods are
all reasonable choices attaining nearly nominal coverage without
excessive length.
The bottom five methods, from Student's $t$ to the lognormal
bootstrap $t$ have coverage levels that are unacceptably low 
by comparison.  While the top three methods cover even more
than the target level, they do so with unreasonable extra length.

This case also illustrates how some bootstrap $t$ methods,
such as those using lognormal or Mammen weights
can cover even less often than \bca\ does.  At the same time,
three of the bootstrap $t$ methods cover even more than
the usual multinomial bootstrap $t$ does.  Those are the
ones that have even greater probabilities of assigning
a weight of zero to an observation.

\section{Test cases}\label{sec:testcases}

We can examine some specific distributions to find the resulting coverage
probabilities and interval lengths for different bootstrap methods.
We begin with the asymptotic theory from \cite{hall:1988}
because that lets us select distributions to provide quite different tests
of the ACIs.
Hall's asymptotic theory is based on moments with assumed finite
eighth moments and a Cram\'er condition. So we include some
distributions with unbounded fourth moment and also
a discrete distribution for which the Cram\'er condition
does not hold.

\subsection{Asymptotic coverage}\label{sec:hallasymptotic}

Table 1 of \cite{hall:1988} provides some asymptotic coverage formulas
for bootstrap ACIs for the mean of $n$ IID observations
as the sample size $n\to\infty$.
\cite{likehall} added a formula for Student's $t$ method.
The method of \cite{hall:1988} for Normal theory
used $\hat\sigma$ and quantiles of $\dnorm(0,1)$
instead of $s$ and $t_{(n-1)}$ quantiles.

The coverage errors for 95\% confidence intervals
are $\Pr( L \le\mu\le U)-0.95$, so negative values
indicate undercoverage.  The asymptotic formulas
involve the skewness $\gamma = \e( (x-\mu)^3)/\sigma^3$
and the kurtosis $\kappa = \e((x-\mu)^4)/\sigma^4-3$.
The coverage errors for two-sided 95\% confidence intervals are
\begin{align*}
\text{Normal theory:} &\quad(1/n) \varphi(z^{0.975})\bigl[\phantom{-}0.14\kappa-2.12\gamma^2-3.35\bigr] +O(1/{n^2}),\\
\text{Student's $t$:} &\quad(1/n) \varphi(z^{0.975})\bigl[\phantom{-}0.14\kappa-2.12\gamma^2\,\,\,\phantom{-3.35}\bigr] +O(1/{n^2}),\\
\text{Percentile:} &\quad(1/n)\varphi(z^{0.975})\bigl[-0.72\kappa -0.37\gamma^2-3.35\bigr] +O(1/{n^2}),\\
\text{Bootstrap $t$:} &\quad(1/n)\varphi(z^{0.975})\bigl[-2.84\kappa+4.25\gamma^2\,\,\,\phantom{-3.35}\bigr] + O(1/{n^2}),\quad\text{and}\\
\text{\bca{}:} &\quad(1/n)\varphi(z^{0.975})\bigl[-2.63\kappa+3.11\gamma^2-3.35\bigr] + O(1/{n^2}),
\end{align*}
where $\varphi$ is the $\dnorm(0,1)$ probability density function.

The error expressions above help to explain why the bootstrap $t$ is often
so effective. It has the largest positive coefficient for $\gamma^2$ 
of all the methods in Table 1 of \cite{hall:1988} and it is the only bootstrap
method with a zero intercept instead of $-3.35$.  

Plugging $n=18$ into Hall's formula gives very
close the coverage error averaged over
$16\leqslant n\leqslant 20$ in Table 6 of \cite{owen:smallci}
for all distributions but one.
That one was the lognormal distribution where the formula
gave a coverage error so low that it corresponded 
to negative coverage for the ACI.

Some of the gaps in Hall's analysis can be filled in by
parts C, D and E of Theorem 1 of \cite{sing:1981} which assume
that $\e(|x-\mu|^3)<\infty$. Then coverage errors are $o(n^{-1/2})$
in the non-lattice case and $O(n^{-1/2})$ in the lattice case.

\subsection{Computations}
Each sampling distribution was represented in $R=10{,}000$
data sets of sizes $2\le n\le 20$.
All of the ACI methods were given the same $10{,}000$
samples of each size so they all received the
same problem difficulty. Furthermore the 
samples of size $n+1$ always contained the points
from the samples of size $n$.  That technique
gives more precise estimates of how length or coverage
vary with $n$.  For $R=10{,}000$ replicates, and a method
with coverage probability $0.95$, the observed number of
of coverages has $0.005$ quantile 9443 and $0.995$ quantile 9555.
Then the coverage proportion has about $99.9$\% probability
of being within $0.57$ percentage points of nominal.  For
a method with coverage $80$\% comparable intervals are
roughly $0.80\pm 0.01$.
Every bootstrap was based on $B=2000$ resamplings. 

Since there is no measure to elicit overall ACI quality
and no generally accepted way to trade off length versus
coverage, we plot a measure of interval length
versus coverage in order to visualize the Pareto
frontier.  Based on the discussion in Section~\ref{sec:decisiontheory},
interval length is measured by the root mean squared
length (RMSL) divided by $\sqrt{n}$ so that it approaches
zero as $O(1/n)$ which is the same rate that
two-sided coverage typically approaches the nominal level.
We also divide the RMSL by $\sigma=\var(x)^{1/2}$ to make
the results for different distributions more nearly comparable.

Well calibrated methods will show a trajectory descending
towards a reference point with coverage $0.95$ and RMSL $0$.
That trajectory is asyptotically linear in the plot coordinates
when coverage error and RMSL are  asymptotically proportional to $1/n$ and $1/\sqrt{n}$, respectively.  Well calibrated methods approach the reference
point from above.  Methods with severe under-coverage  approach the reference point from above and to the left.  
RMSL$/\sqrt{n}$ is always decreasing with $n$ in the ranges shown. For the most
severe under-coverage the trajectories approach the reference almost from the
left.  For some settings
the coverage is not monotone in $n$ over the range explored.  There can be
extreme non-monotonicities for $n\le 6$.

The methods can be clustered into a few groups.
The bootstrap $t$ methods with the greatest values of $\Pr(v_i=0)$
are those with double-or-nothing, half sampling and Poisson
weights. Those methods generally have the longest intervals
and the highest coverage. They also have the most frequent occurences
of infinite length intervals. They are plotted in red/brown/orange
colors, with the more reddish color for the longer intervals.  
The second group is made up of the usual multinomial bootstrap
$t$ as well as the ones with power and beta distributed weights have
lengths and coverage levels lower than the first group.
They are colored by shades of green, with brighter green for the methods with generally longer intervals.
A third group with even shorter intervals and generally very low
coverage is made up of the \bca\ along with bootstrap $t$ with Mammen weights
and  exponential (i.e., Bayesian bootstrap) weights. 
They are plotted in shades of blue and the ordering among them varies with the setting.
ACIs based on Student's $t$ distribution do not fit cleanly into
any of those three groups.  They appear in different 
groups depending on the distribution of $x$, and are shown
in black.  The bootstrap $t$
using lognormal weights have extremely short intervals and
low coverage.  They are included only to show that it is possible to
have weighted bootstrap $t$ intervals with lower coverage (and length)
than \bca\ intervals have. They are shown in gray.

\subsection{Benign/Gaussian data}

A very benign setting for nonparametric ACIs arises when the $x_i$
have a unimodal density function with at most mild skewness and
light tails.  As an archetypal example of such a distribution, 
we consider $x_i\sim \dnorm(0,1)$.  Of course, in practice a user
would not know that the data were from such a parametric model.
We want to see  how methods do on this case because a method
that does poorly in even such a simple setting is not a good choice
overall.

\begin{figure}[t]
\centering
\includegraphics[width=\linewidth]{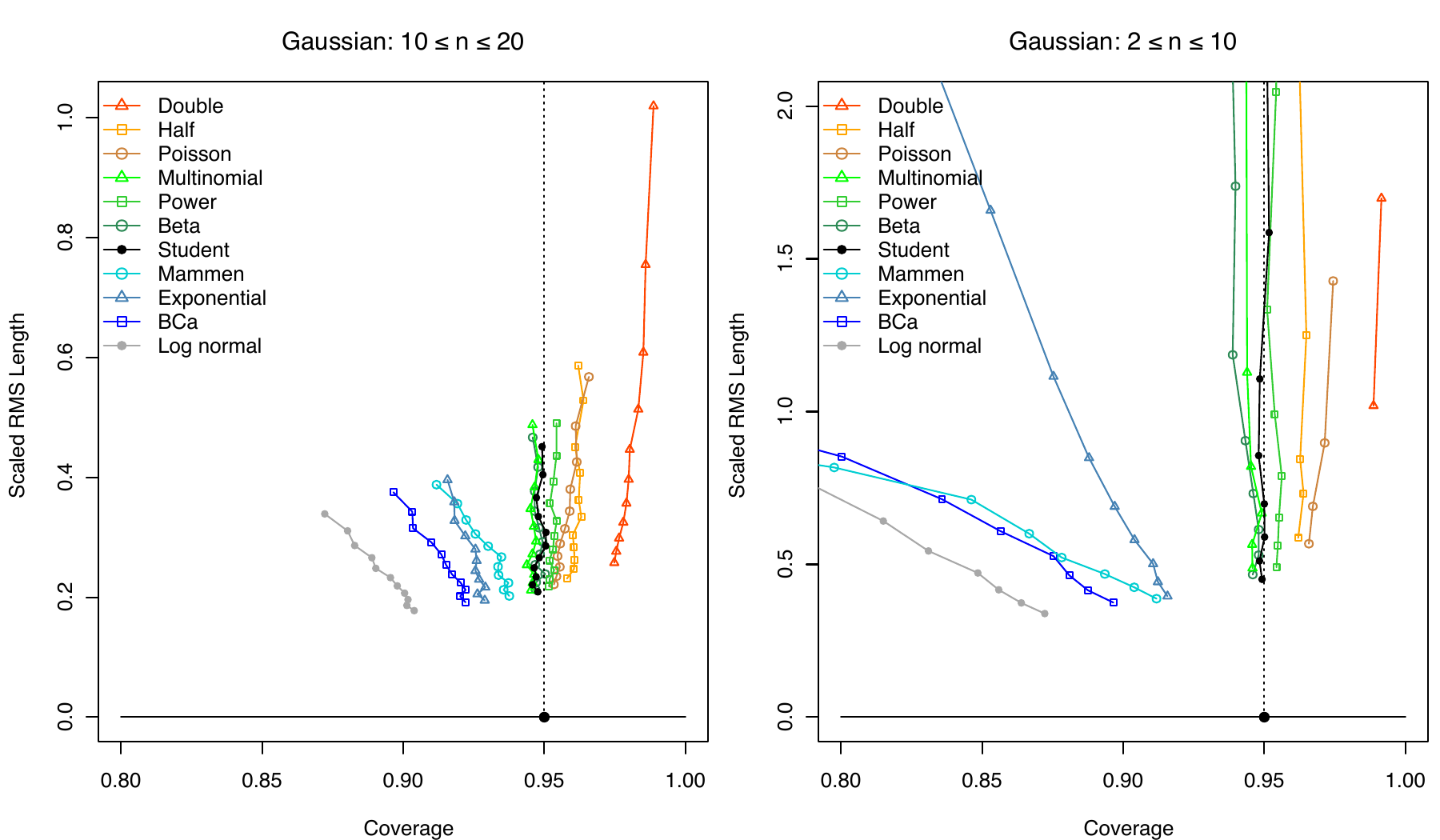}
\caption{\label{fig:sgaus}
This figure shows  RMSL$/\sqrt{n}\sigma$ versus coverage
when $x_i\sim \dnorm(0,1)$. 
The left panel has $10\le n\le20$ and the right panel
has $2\le n\le 10$.  The scaled RMSL decreases with $n$. There is a reference
point for nominal coverage and  $0$ length.  There is a dotted reference
line at the nominal coverage level.
}\end{figure}

Figure~\ref{fig:sgaus} shows estimates of RMSL and coverage
for Gaussian data.  There are two views, one for $10\le n\le 20$
and one for $2\le n\le 10$.
The methods cluster according to the
earlier described groupings.
The double-or-nothing bootstrap has by far the longest
intervals and it covers far more than the nominal amount.
Mammen, exponential and \bca\ intervals are all very short
and under-cover significantly, even on this easy case. The multinomial, power and
beta bootstrap $t$ intervals are intermediate with approximately
nominal coverage and are close to the Student $t$ method
which has exact coverage for this problem.

The bottom of the curves in the left panel shows that for $n=20$
there is a near perfect monotone relationship between the coverage
level and the RMSL.  The one exact method, Student's $t$ interval,
has a very slightly better RMSL than the multinomal and Beta
bootstrap $t$ methods with nearby coverage levels.

From the right panel we can see that the double-or-nothing
bootstrap only had a finite RMSL for $n\ge9$. 
This simply means that at least one of the $10{,}000$
ACIs it produced had infinite length.
An analysis in Appendix~\ref{sec:conventions} shows that
for distinct $x_i$ and $B\to\infty$ that a finite length
ACI will happen for $n\ge9$ but not necessarily for $n\le 8$.
The Poisson ACIs were only finite for $n\ge7$ which also
matches the cutoff in Appendix~\ref{sec:conventions}.
When $n$ is just barely large enough to assure a finite
ACI length, then the RMSL can be very heavy-tailed.
The plot does not show every finite RMSL in order
to better show the bulk of them.

\begin{table}[b]
\centering
\begin{tabular}{lccccccc}
\toprule
& \multicolumn{2}{c}{$n=2$}& \multicolumn{2}{c}{$n=3$}& \multicolumn{2}{c}{$n=4$}\\
\midrule
Method &   Cover & RMSL&  Cover & RMSL & Cover& RMSL\\
\midrule
Power & $0.9650$ & $26.55$ & $0.9563$ & $7.66$ & $0.9542$ & $4.09$\\
Student's $t$ & $0.9493$ & $17.94$ & $0.9501$ & $4.98$ & $0.9517$ & $3.17$\\
Beta & $0.9389$ & $15.31$ & $0.9356$ & $5.76$ & $0.9398$ & $3.48$\\
\bottomrule
\end{tabular}
\caption{\label{tab:lowgaus}
Coverage and unscaled RMSL for small sample sizes on Gaussian data.
}
\end{table}

The power and beta bootstrap $t$ methods do very well
for the Gaussian data even with tiny $n$.
Table~\ref{tab:lowgaus} shows the coverage and RMSLs
for $2\le n\le 4$ for the power and beta bootstrap $t$
methods along with the value for Student's $t$.
The coverage errors are much closer to nominal
there than we see for much larger $n$ on 
more difficult distributions.

\subsection{Skewed/exponential data}

Skewness is one of the main reasons to switch from Student's $t$
distribution to some kind of bootstrap distribution.
Our archetypal example for mildly skewed data has $x_i\sim\dexp(1)$.
It has skewness $2$.
There is no universal rule for what amount of skewness
is mild and what is severe.  Some authors consider skewness
in $[-1,1]$ to be mild and skewness $2$ to already be severe.
Others consider skewness of $2$ to not be extreme.
Where \cite{hest:2015} recommends 
$n\ge 4815$ for Student's $t$ test he finds
that the bootstrap $t$ with $n\ge101$ meets his stringent conditions.

\begin{figure}[t]
\centering \includegraphics[width=\linewidth]{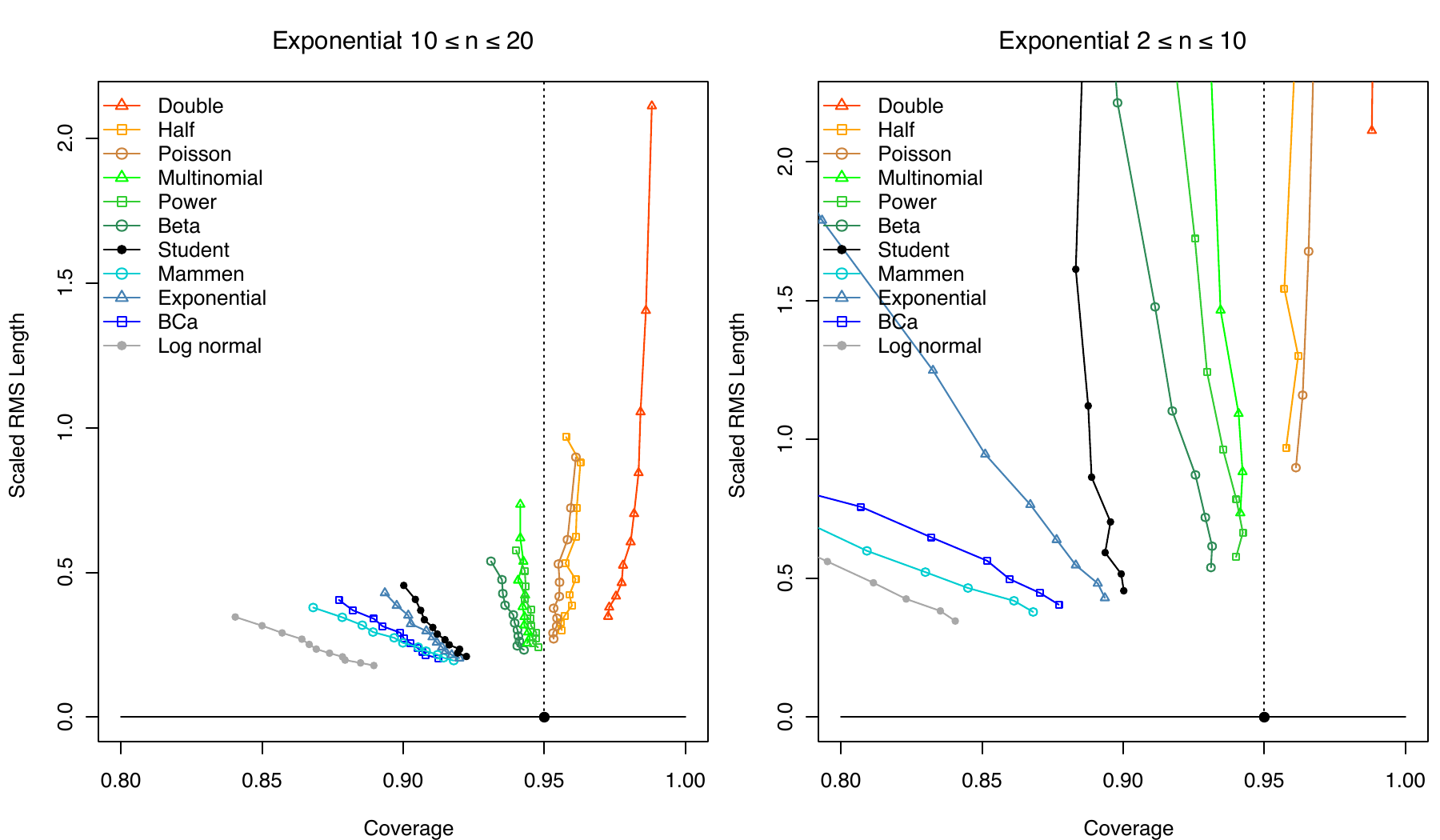}
\caption{\label{fig:sexpo}
Same as Figure~\ref{fig:sgaus}
except that $x_i\sim \dexp(1)$.
}\end{figure}

Figure~\ref{fig:sexpo} shows results for $x\sim \dexp(1)$.
Now Student's $t$ clusters with \bca\
and the Mammen and exponential bootstraps, instead  of
with the beta, power and multinomial bootstrap $t$. 
Student's $t$ and \bca\  under-cover significantly.
The multinomial, power and beta bootstraps slightly under-cover.
The power version has comparable coverage to the multinomial
over $10\le n\le20$ with somewhat smaller RMSL.

Half sampling, Poisson and double-or-nothing bootstraps
over cover and have much greater length.
The double-or-nothing bootstrap has infinite RMSL for $n\le 8$.

\subsection{Heavy-tailed symmetric/$t_{(4)}$ data}

As noted above  the standard Student ACIs attain
high coverage on symmetric heavy tailed data.
Our archetypal example for this case is $x_i\sim t_{(4)}$.
This distribution is symmetric with skewness zero and infinite kurtosis.
As a result the asymptotic coverage results from 
\cite{hall:1988} do not apply for it, but Theorem 1C  of
\cite{sing:1981} shows that coverage errors are $o(n^{-1/2})$
for the bootstrap $t$ because $\e( |x|^3)<\infty$ and the distribution is non-lattice.

\begin{figure}[t]
\centering
\includegraphics[width=\linewidth]{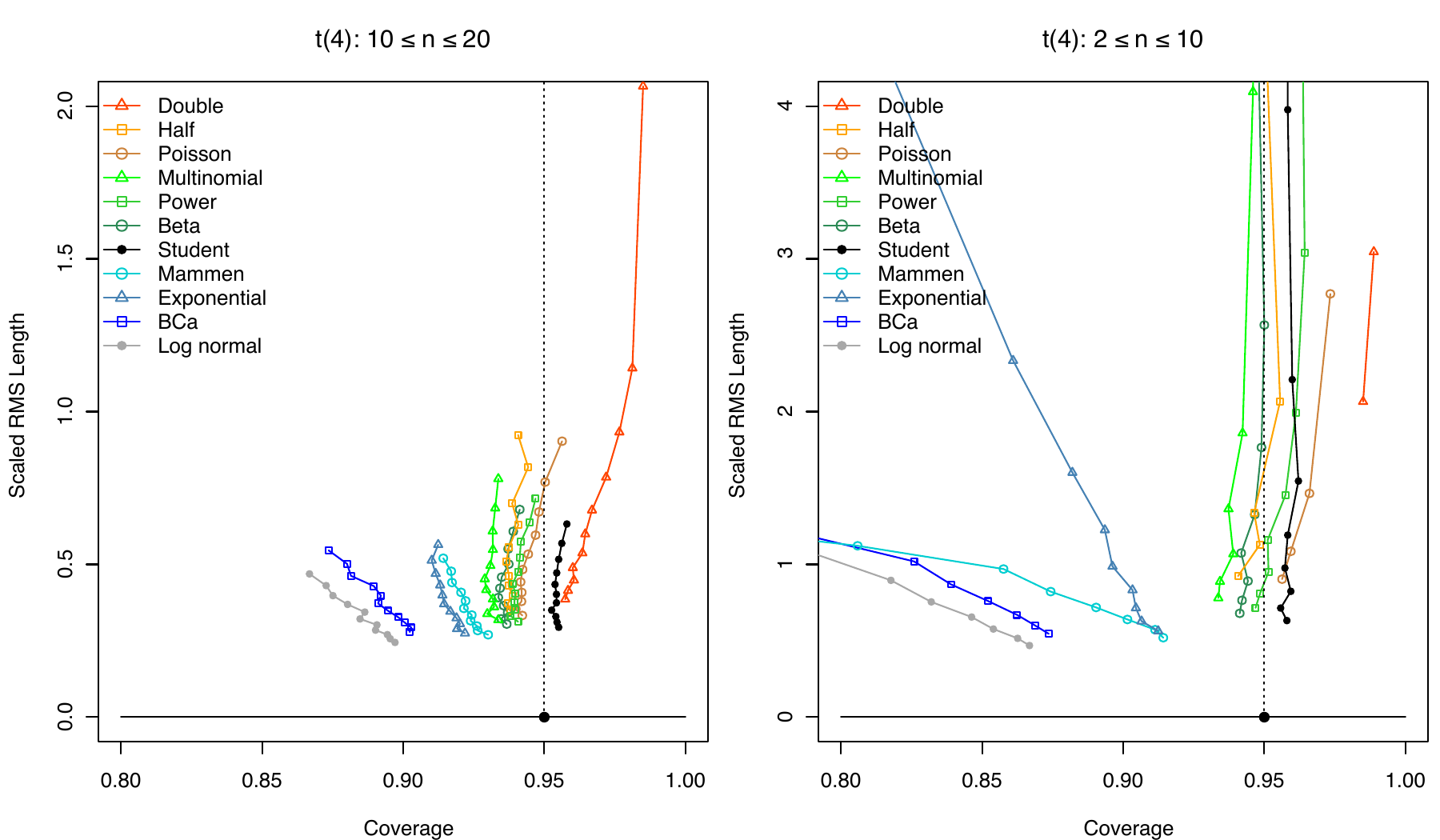}
\caption{\label{fig:stfor}
Same as Figure~\ref{fig:sgaus} except that 
$x_i\sim t_{(4)}$.
}\end{figure}

Student's $t$ ACIs are the best by far here in line
with well known theory.  All of the bootstrap $t$
method under-cover except for the double-or-nothing
version which has infinite RMSL for $n\le 8$.
\bca\ barely attains over 90\% coverage by $n=20$.

\subsection{Light-tailed symmetric/$\dunif(0,1)$ data}

Here we consider light-tailed symmetric data, using $\dunif(0,1)$
as the example. Figure~\ref{fig:sunif} shows the results.  Student's $t$
does best which is not surprising for symmetric data.  For $n\ge10$,
most methods significantly over-cover the true mean. The Bayesian
bootstrap (exponential weights) and \bca\ under-cover but come
quite close to the nominal level.

\begin{figure}[t]
\centering
\includegraphics[width=\linewidth]{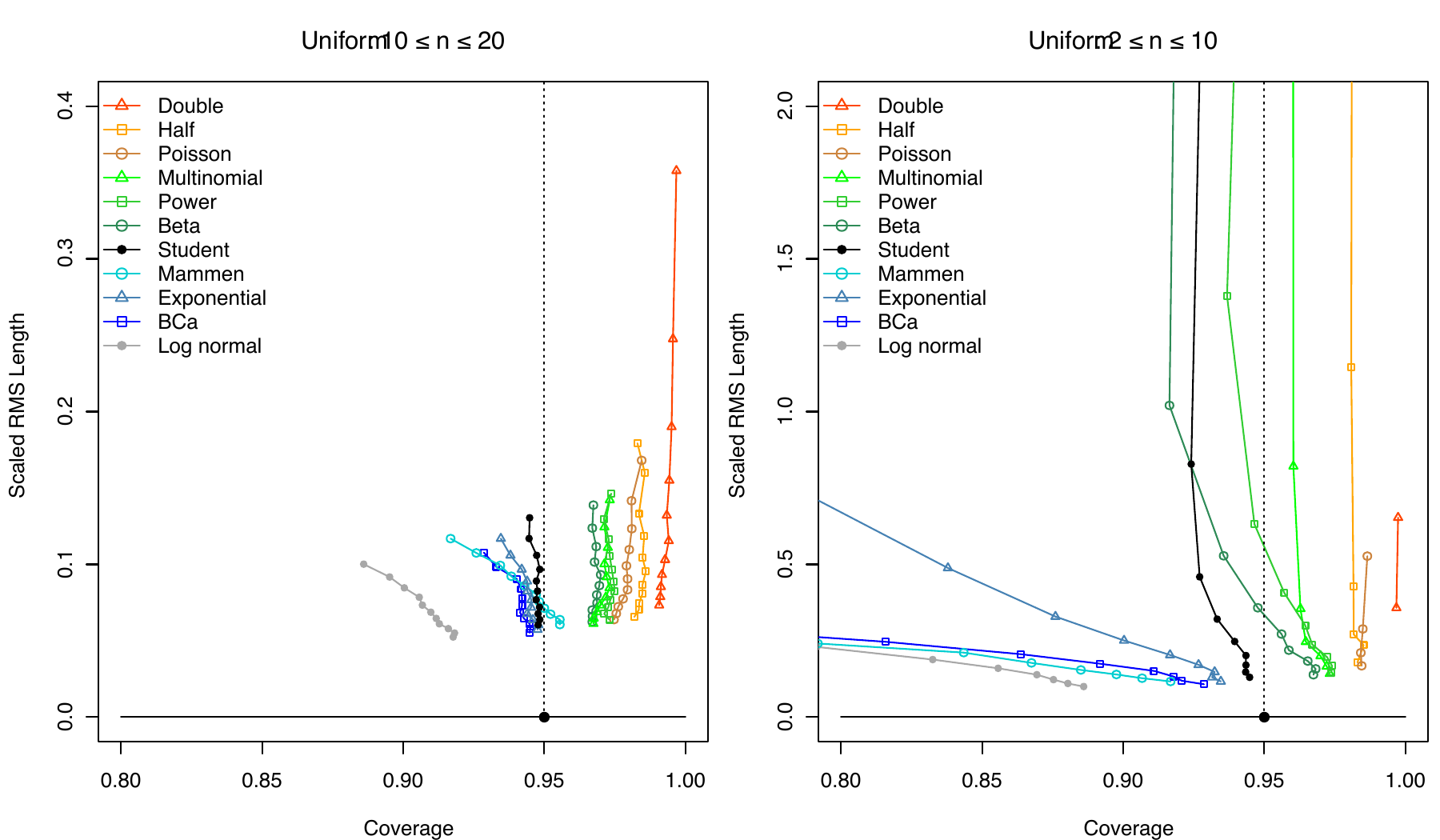}
\caption{\label{fig:sunif}
Same as Figure~\ref{fig:sgaus} except that 
$x_i\sim \dunif(0,1)$.
}\end{figure}

\subsection{Severe/lognormal setting}

A severe setting has a very large kurtosis but is also not symmetric
enough for Student's $t$ ACIs to do well.  For this case, we consider
the lognormal distribution $x_i\sim \exp(\dnorm(0,1))$.
None of the bootstrap methods in \cite{owen:smallci} were able to get much more
than 90\% coverage by $n=20$ for this distribution when the nominal
rate was 95\%. The predicted coverage using the formulas of \cite{hall:1988}
in Section~\ref{sec:hallasymptotic}
was actually negative because of the extremely large kurtosis.
This distribution has 
\begin{align*}
\kappa &= \exp(4)+2\exp(3)+3\exp(2)-6\doteq 110.93
\quad\text{and}\\
\gamma &= (\exp(1)+2)\sqrt{\exp(1)-1}\doteq 6.18.
\end{align*}

\begin{figure}[t]
\centering
\includegraphics[width=\linewidth]{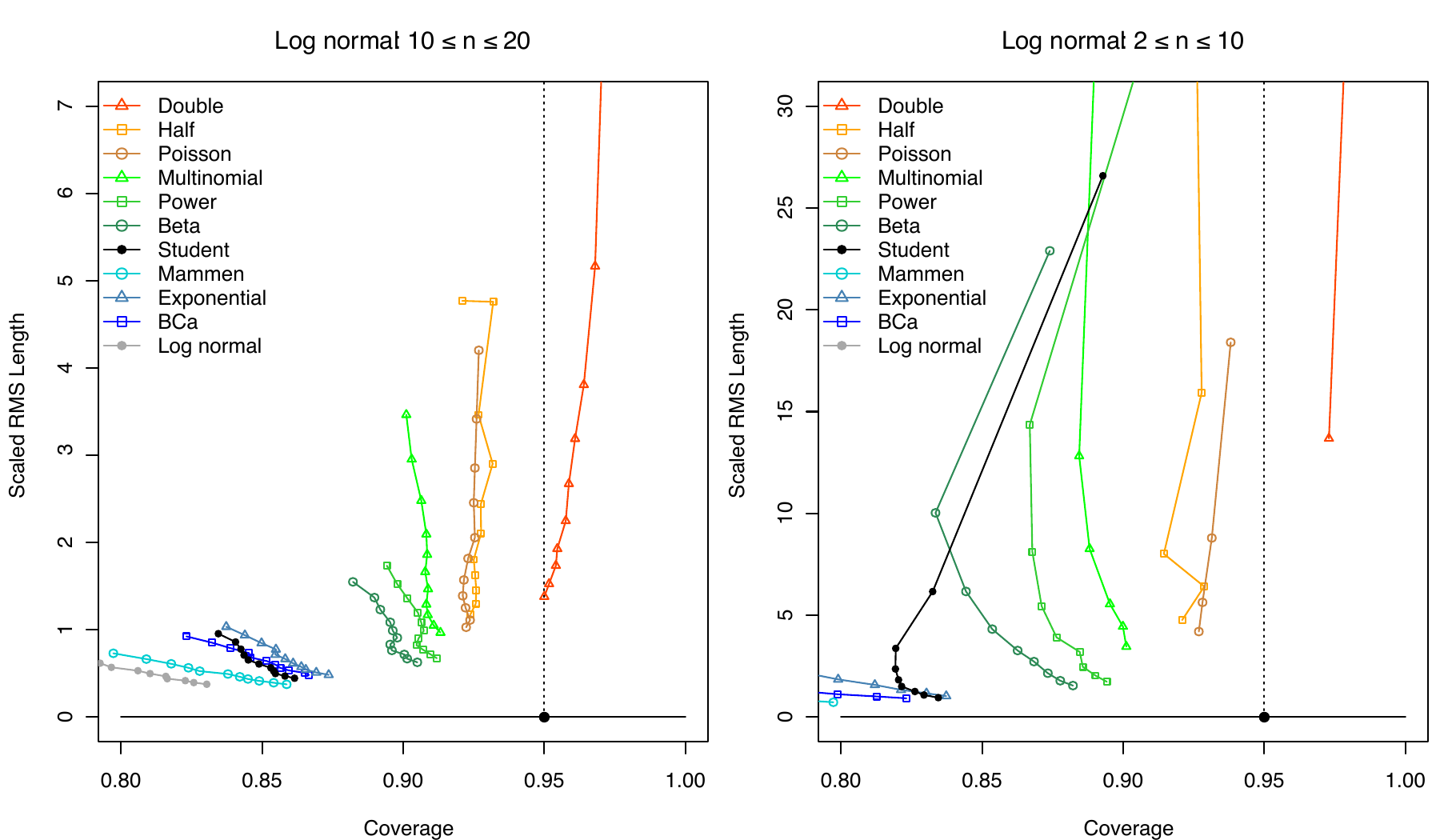}
\caption{\label{fig:slnor}
Same as Figure~\ref{fig:sgaus} except that $x_i\sim \exp(\dnorm(0,1))$.
}\end{figure}

This lognormal data is very heavily skewed and has an enormous kurtosis.
For $10\le n\le20$,
the double-or-nothing bootstrap has somewhat over nominal coverage.  
It is extremely hard for any of the other methods to cover well.
Student's $t$ ACIs severely undercover and are clustered
with \bca\ and the Bayesian bootstrap.  The best coverage levels only
arose for methods with $\Pr(v_i=0)>0$.  For the larger sample sizes in
this window the power bootstrap $t$ had shorter intervals and nearly
the same coverage as the multinomial bootstrap $t$.

For the very small $n\in[2,10]$, the shortest methods barely make it into
the plotting region with a coverage cutoff near $0.8$.
We also see some non-monotonicity in the coverage levels.

\subsection{Discrete mildly skewed data: $\dpois(1)$}

Data with a discrete distribution can generate numerous
tied values among the $x_i$. The presence of such ties makes 
$\hat\sigma^*=0$ much more common.  This is especially problematic
for methods that also allow $v_i=0$.

Another difficulty arises when the distribution of $x_i$ only
takes integer values, or more generally, only takes values within
some arithmetic sequence in $\real$.
In this case the distribution $F$ does not satisfy a Cram\'er
condition that was a key assumption in the asymptotic
analysis of \cite{hall:1988} and others.  There is thus much
less known about coverage properties even for large $n$.
For count data, with small $n$ and integer counts $w_i$,
it can easily happen that $\bar x^* = \bar x$ and then the
distribution of $t^*$ has an atom at $0$.

Our example for this setting is $x_i\sim\dpois(1)$.  It has
mild skewness and kurtosis both equal to $1$.
Second order accuracy does not hold for such data.

\begin{figure}[t]
\centering
\includegraphics[width=\linewidth]{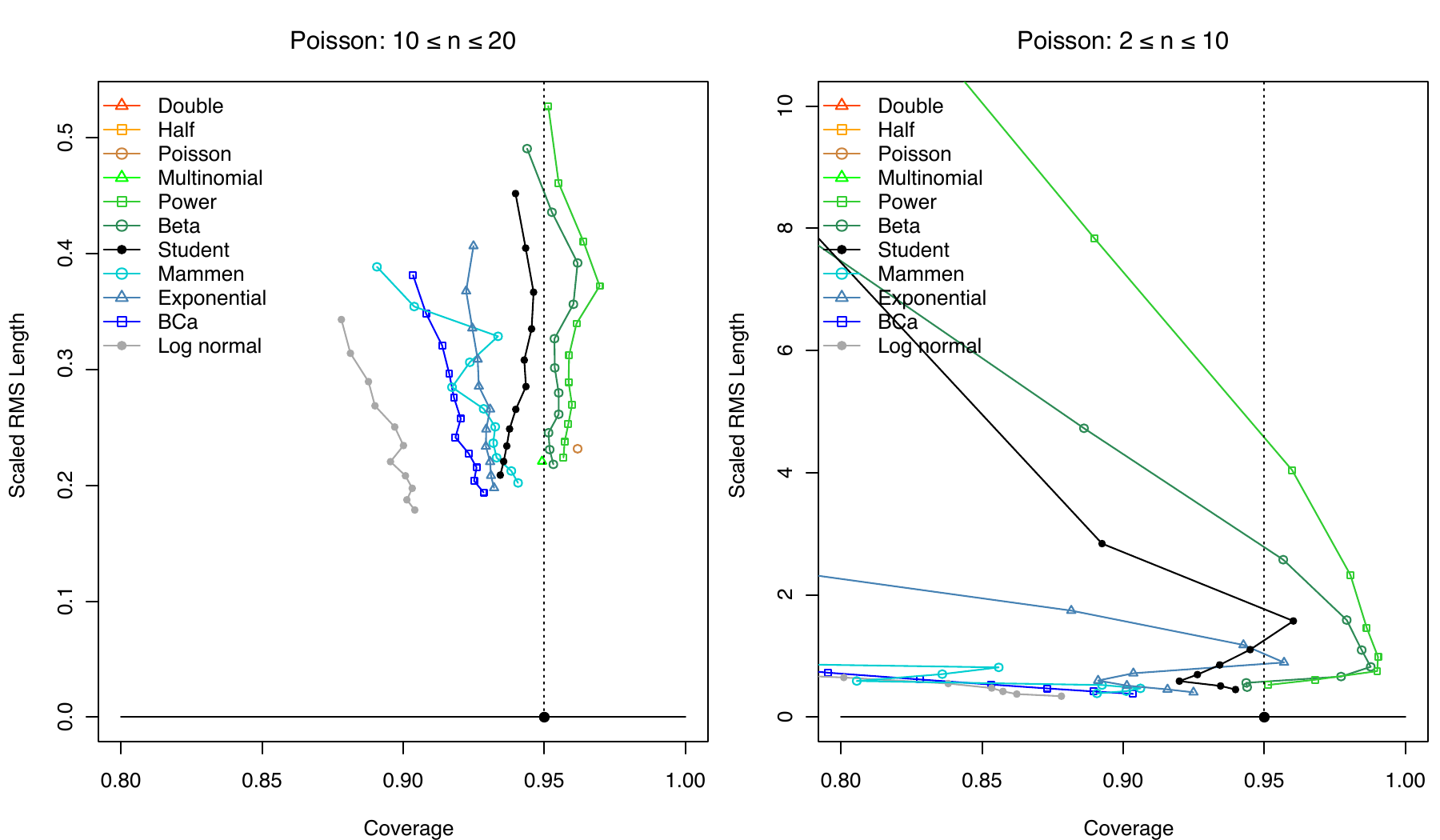}
\caption{\label{fig:spois}
Same as Figure~\ref{fig:sgaus} except that $x_i\sim \dpois(1)$. 
}\end{figure}

Results for Poisson data are shown in Figure~\ref{fig:spois}.
When the top (or bottom) $2.5\%$ of $t^*$ values are
infinite the resulting ACI has infinite length.
All of the double-or-nothing and half sampling ACIs
had infinite RMSL for $2\le n\le 20$. 
For the Poisson bootstrap and the multinomial bootstrap
the first finite RMSL was for $n=20$.  
The clear lesson is that for discrete data, we should avoid
bootstraps with $\Pr(v_i=0)>0$.

For $10\le n\le20$, the beta bootstrap slightly over covered. 
Among the under-covering methods, the Mammen bootstrap $t$
did the best.
For $n\le 10$ the attained coverage levels of several methods are not 
monotone in $n$ although the RMSLs are monotone.

\subsection{Discrete skewed data/geometric data}
We also consider a geometric distribution 
equal to the number of failures seen prior to
the first success in a sequence of Bernoulli
trials with success probability $p$ 
We set $p=1-\exp(-1) \approx 0.632$.
This distribution has skewness $(2-p)/\sqrt(1-p)\doteq 2.25$
and kurtosis $6 + p^2/(1-p)\doteq7.09$.
It is thus a more severe discrete distribution than $\dpois(1)$.

\begin{figure}[t]
\centering
\includegraphics[width=\linewidth]{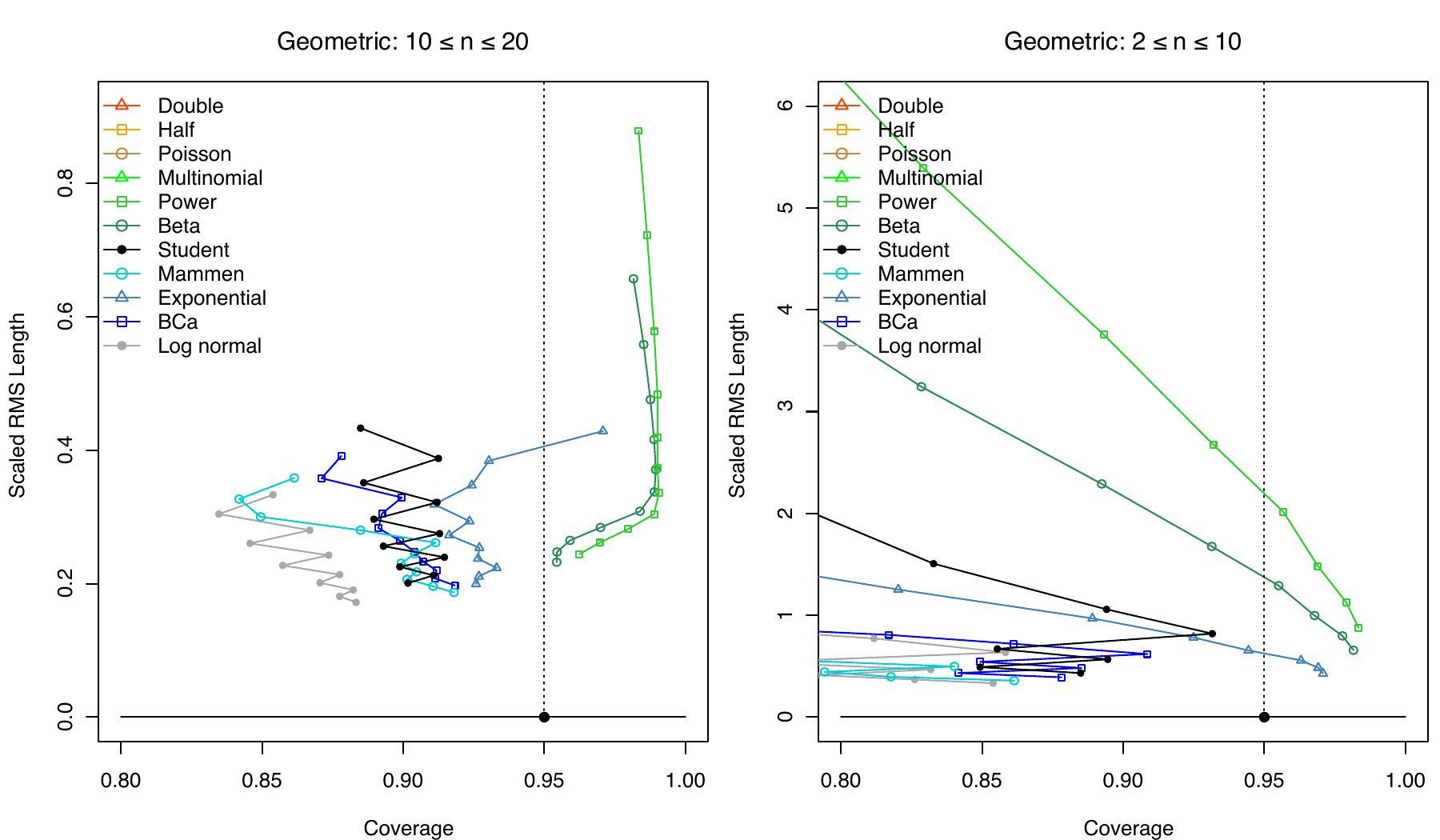}
\caption{\label{fig:sgeom}
Same as Figure~\ref{fig:sgaus} except that 
$\Pr(x_i=x) = p(1-p)^{x-1}$ for $p=1-\exp(-1)$.
}\end{figure}

The results for this distribution are shown in Figure~\ref{fig:sgeom}.
All of the methods with $\Pr(w_i=0)>0$ had
infinite RMSLs for all sample sizes $2\le n\le20$.
For $10\le n\le20$ the Student's $t$ intervals
alternate between higher and lower coverage as $n$
changes between odd and even.  The lognormal
intervals do this too, except that they get shorter
for odd sample sizes while Student's $t$ intervals
get shorter for even sample sizes.
These alternations can be partially understood
in terms of the discreteness of the  distribution
of $(x_1,x_2,\dots,x_n)$.  For example when $n=10$
there is roughly $6.7\%$ probability that the sample
has one $2$, two $1$s and eight $0$s.  It is not clear
why some of the ACIs are strongly affected by 
this discreteness and others are not.  For instance, the
power and beta bootstrap $t$ methods over-cover
the true mean for $7\le n\le 10$ and do not show
alternating coverage.  Similarly, the ACI coverage levels
for Poisson data do not have such a sharp alternation pattern.



\section{Discussion}\label{sec:discussion}

Nonparametric ACIs are well understood in the limit as $n\to\infty$
when $F$ satisfies some conditions based on moments and
also a Cram\'er condition.  This paper looks at small $n$,
at some distributions with large or even infinite kurtosis
as well as distributions with atoms that do not satisfy
the Cram\'er condition.
The conclusions from the numerical investigations are:
\begin{compactenum}[a)]
\item
For general purpose nonparametric ACIs for the
mean of a small sample, we can rule  out \bca\, and the Mammen
and lognormal bootstrap $t$ ACIs.  They all have below nominal coverage
in every example considered here while other methods did better. While \bca\
has advantages they do not extend to ACIs for the mean based on a
small sample.

\item
Student's $t$ intervals did well on symmetric Gaussian and heavy-tailed 
data, but not on skewed data. 
These outcomes are well understood theoretically.

\item
Methods with second order accuracy for one-sided confidence
intervals were not necessarily better at two-sided coverage.
The \bca, Mammen,  and lognormal samplers are second order
accurate but had severe under-coverage.
The second order accurate beta bootstrap $t$ had worse under-coverage
than the power bootstrap $t$ for several of the examples.

\item
Methods with $\Pr(v_i>0)=1$ never give
infinite length confidence intervals. When at least two of the observations are distinct,
they always have $\hat\sigma^*>0$. If all observations are equal
they default to the interval $[\bar x,\bar x]$. 
These methods are especially valuable when the distribution of $x_i$ has atoms.
In those cases the methods with $\Pr(v_i=0)>0$ commonly had
infinite RMSLs.

\item
For the lognormal distribution, which is continuous
with extremely heavy tails, the best coverage came from methods
that could select $v_i=0$.  Of those, the double-or-nothing bootstrap
$t$ had the highest coverage, even over-covering for $n\le 19$.

\item The value $n=10$ is a convenient dividing line between
small and very small sample sizes.  For $n\le 10$ the RMSL
versus coverage curves frequently looked very different from
any expected asymtotic behavior. For $n\ge10$, most curves
already looked to be following a trajectory towards the reference point.
\end{compactenum}

\smallskip

Among  methods with good asymptotic performance
we should prefer those that do well on small sample sizes
because that is where there can be important differences.
Of the methods in this paper the power law bootstrap $t$
and the beta bootstrap $t$ consistently showed
intermediate length and coverage, coming between
methods that severely undercovered or had severely long
intervals. Of those two, only the beta bootstrap $t$ is
second order accurate.

If one has an extreme preference for coverage and
much less concern for interval length, then
the multinomial bootstrap $t$, double-or-nothing bootstrap,
Poisson bootstrap and half sampling bootstrap provide it.
Of those only the Poisson and multinomial bootstrap $t$ samplers
are second order accurate.

We compared \bca, Student's $t$ and several bootstrap $t$
algorithms. The \bca\ algorithm was investigated only
for multinomial weights.  It might benefit from some other
weight distribution.  However, the ACIs from \bca\ are
nested inside the convex hull $[x_{(1)},x_{(n)}]$ of the
data and that already limits their coverage for small $n$.
Empirical likelihood ACIs \citep{elbook} benefit from bootstrap calibration
but are also nested inside the convex hull,
though there has been some work on versions that extend
beyond the convex hull of the data \citep{chen:vari:abra:2008,
emer:owen:2009,tsao:wu:2013}.

\section*{Acknowledgments}

This work was supported by the National Science Foundation under
grant DMS-2152780.  I thank Tilmman Gneiting, Enno Mammen,
Lisa Hofer and Tim Hesterberg for helpful discussions.

\bibliographystyle{apalike}
\bibliography{boot}

\begin{thebibliography}{}

\bibitem[Angelopoulos and Bates, 2023]{ange:bate:2023}
Angelopoulos, A.~N. and Bates, S. (2023).
\newblock Conformal prediction: A gentle introduction.
\newblock {\em Foundations and Trends{\textregistered} in Machine Learning},
  16(4):494--591.

\bibitem[Austern and Mackey, 2022]{aust:mack:2022}
Austern, M. and Mackey, L. (2022).
\newblock Efficient concentration with {Gaussian} approximation.
\newblock Technical report, arXiv:2208.09922.

\bibitem[Bahadur and Savage, 1956]{baha:sava:1956}
Bahadur, R.~R. and Savage, L.~J. (1956).
\newblock The nonexistence of certain statistical procedures in nonparametric
  problems.
\newblock {\em Annals of Mathematical Statistics}, 27(4):1115--1122.

\bibitem[Boos and Hughes-Oliver, 2000]{boos:hugh:2000}
Boos, D.~D. and Hughes-Oliver, J.~M. (2000).
\newblock How large does n have to be for {Z} and t intervals?
\newblock {\em The American Statistician}, 54(2):121--128.

\bibitem[Brehmer and Gneiting, 2021]{breh:gnei:2021}
Brehmer, J.~R. and Gneiting, T. (2021).
\newblock Scoring interval forecasts: Equal-tailed, shortest, and modal
  interval.
\newblock {\em Bernoulli}, 28(3):1993--2010.

\bibitem[Casella et~al., 1994]{case:hwan:robe:1994}
Casella, G., Hwang, J.~G., and Robert, C.~P. (1994).
\newblock Loss functions for set estimation.
\newblock In Gupta, S.~S. and Berger, J.~O., editors, {\em Statistical Decision
  Theory and Related Topics V}, pages 237--251. Springer-Verlag, Inc., New
  York.

\bibitem[Casella and Hwang, 1991]{case:hwan:1991}
Casella, G. and Hwang, J.~T. (1991).
\newblock Evaluating confidence sets using loss functions.
\newblock {\em Statistica Sinica}, pages 159--173.

\bibitem[Chen et~al., 2008]{chen:vari:abra:2008}
Chen, J., Variyath, A.~M., and Abraham, B. (2008).
\newblock Adjusted empirical likelihood and its properties.
\newblock {\em Journal of Computational and Graphical Statistics},
  17(2):426--443.

\bibitem[Cressie, 1980]{cres:1980}
Cressie, N. (1980).
\newblock Relaxing assumptions in the one sample $t$-test.
\newblock {\em Australian Journal of Statistics}, 22(2):143--153.

\bibitem[Cs{\"o}rg{\H{o}} et~al., 2014]{csor:mart:nasa:2024}
Cs{\"o}rg{\H{o}}, M., Martsynyuk, Y.~V., and Nasari, M.~M. (2014).
\newblock Another look at bootstrapping the student t-statistic.
\newblock {\em Mathematical Methods of Statistics}, 23(4):256--278.

\bibitem[Diaconis and Holmes, 1994]{diac:holm:1994}
Diaconis, P. and Holmes, S. (1994).
\newblock Gray codes for randomization procedures.
\newblock {\em Statistics and Computing}, 4:287--302.

\bibitem[Dice, 1945]{dice:1945}
Dice, L.~R. (1945).
\newblock Measures of the amount of ecologic association between species.
\newblock {\em Ecology}, 26(3):297--302.

\bibitem[DiCiccio and Efron, 1996]{dici:efro:1996}
DiCiccio, T.~J. and Efron, B. (1996).
\newblock Bootstrap confidence intervals.
\newblock {\em Statistical Science}, 11(3):189--228.

\bibitem[Donoho, 2024]{dono:2024}
Donoho, D. (2024).
\newblock Data science at the singularity.
\newblock {\em Harvard Data Science Review}, 6(1).

\bibitem[Efron, 1969]{efro:1969}
Efron, B. (1969).
\newblock Student's $t$-test under symmetry conditions.
\newblock {\em Journal of the American Statistical Association},
  64(328):1278--1302.

\bibitem[Efron, 1981]{efro:1981}
Efron, B. (1981).
\newblock Nonparametric standard errors and confidence intervals.
\newblock {\em canadian Journal of Statistics}, 9(2):139--158.

\bibitem[Efron, 1987]{efro:1987}
Efron, B. (1987).
\newblock Better bootstrap confidence intervals.
\newblock {\em Journal of the American statistical Association},
  82(397):171--185.

\bibitem[Efron and Tibshirani, 1993]{efro:tibs:1993}
Efron, B.~M. and Tibshirani, R.~J. (1993).
\newblock {\em An Introduction to the Bootstrap}.
\newblock Chapman and Hall, Boca Raton, FL.

\bibitem[Emerson and Owen, 2009]{emer:owen:2009}
Emerson, S.~C. and Owen, A.~B. (2009).
\newblock Calibration of the empirical likelihood method for a vector mean.
\newblock {\em Electronic Journal of Statistics}, 3:1161--1192.

\bibitem[Fisher and Hall, 1991]{fish:hall:1991}
Fisher, N.~I. and Hall, P. (1991).
\newblock Bootstrap algorithms for small samples.
\newblock {\em Journal of Statistical Planning and Inference}, 27(2):157--169.

\bibitem[Frongillo and Kash, 2014]{fron:kash:2014}
Frongillo, R. and Kash, I. (2014).
\newblock General truthfulness characterizations via convex analysis.
\newblock In {\em Web and Internet Economics: 10th International Conference,
  WINE 2014, Beijing, China, December 14--17, 2014. Proceedings 10}, pages
  354--370. Springer.

\bibitem[Gneiting, 2011]{gnei:2011}
Gneiting, T. (2011).
\newblock Making and evaluating point forecasts.
\newblock {\em Journal of the American Statistical Association},
  106(494):746--762.

\bibitem[Gneiting and Raftery, 2007]{gnei:raft:2007}
Gneiting, T. and Raftery, A.~E. (2007).
\newblock Strictly proper scoring rules, prediction, and estimation.
\newblock {\em Journal of the American Statistical Association},
  102(477):359--378.

\bibitem[Greenland and Poole, 2013]{gree:pool:2013}
Greenland, S. and Poole, C. (2013).
\newblock Living with p values: resurrecting a {Bayesian} perspective on
  frequentist statistics.
\newblock {\em Epidemiology}, 24(1):62--68.

\bibitem[Guillou, 1999]{guil:1999}
Guillou, A. (1999).
\newblock Efficient weighted bootstraps for the mean.
\newblock {\em Journal of Statistical Planning and Inference}, 77(1):11--35.

\bibitem[Hall, 1986]{hall:1986}
Hall, P. (1986).
\newblock On the number of bootstrap simulations required to construct a
  confidence interval.
\newblock {\em The Annals of Statistics}, pages 1453--1462.

\bibitem[Hall, 1988]{hall:1988}
Hall, P. (1988).
\newblock Theoretical comparisons of bootstrap confidence intervals.
\newblock {\em The Annals of Statistics}, 16(3):927--953.

\bibitem[Hall and Mammen, 1994]{hall:mamm:1994}
Hall, P. and Mammen, E. (1994).
\newblock On general resampling algorithms and their performance in
  distribution estimation.
\newblock {\em The Annals of Statistics}, pages 2011--2030.

\bibitem[Hand et~al., 2021]{hand:chri:kiri:2021}
Hand, D.~J., Christen, P., and Kirielle, N. (2021).
\newblock {$F^*$}: an interpretable transformation of the {F}-measure.
\newblock {\em Machine learning}, 110(3):451--456.

\bibitem[Hartigan, 1969]{hart:1969}
Hartigan, J.~A. (1969).
\newblock Using subsample values as typical values.
\newblock {\em Journal of the American Statistical Association},
  64(328):1303--1317.

\bibitem[Hesterberg, 1995]{hest:1995}
Hesterberg, T.~C. (1995).
\newblock Tail-specific linear approximations for efficient bootstrap
  simulations.
\newblock {\em Journal of Computational and Graphical Statistics},
  4(2):113--133.

\bibitem[Hesterberg, 2015]{hest:2015}
Hesterberg, T.~C. (2015).
\newblock What teachers should know about the bootstrap: Resampling in the
  undergraduate statistics curriculum.
\newblock {\em The American Statistician}, 69(4):371--386.

\bibitem[Hofer, 2022]{hofe:2022}
Hofer, L.~J. (2022).
\newblock Interval score for comparison of confidence intervals.
\newblock Master's thesis, University of Zurich.
\newblock \url{https://www.biostat.uzh.ch/master-theses}.

\bibitem[Hofer and Held, 2022]{hofe:held:2022}
Hofer, L.~J. and Held, L. (2022).
\newblock Comparing confidence intervals for a binomial proportion with the
  interval score.
\newblock Technical report, arXiv:2207.03199.

\bibitem[Hoyt and Owen, 2024]{meandimradial}
Hoyt, C. and Owen, A.~B. (2024).
\newblock Mean dimension of radial basis functions.
\newblock {\em SIAM Journal on Numerical Analysis}, 62(3):1191--1211.

\bibitem[Koenker, 2005]{koen:2005}
Koenker, R. (2005).
\newblock {\em Quantile regression}.
\newblock Cambridge University Press, Cambridge.

\bibitem[Kohavi et~al., 2014]{koha:etal:2014}
Kohavi, R., Deng, A., Longbotham, R., and Xu, Y. (2014).
\newblock Seven rules of thumb for web site experimenters.
\newblock In {\em Proceedings of the 20th ACM SIGKDD international conference
  on Knowledge discovery and data mining}, pages 1857--1866.

\bibitem[Kufs, 2010]{kufs:2010}
Kufs, C. (2010).
\newblock
  \url{https://statswithcats.net/2010/07/11/30-samples-standard-suggestion-or-superstition/}.
\newblock Accessed: 2025-05-26.

\bibitem[L'Ecuyer et~al., 2023]{ci4rqmc}
L'Ecuyer, P., Nakayama, M.~K., Owen, A.~B., and Tuffin, B. (2023).
\newblock Confidence intervals for randomized quasi-monte carlo estimators.
\newblock In Corlu, C.~G., Hunter, S.~R., Lam, H., Onggo, B.~S., Shortle, J.,
  and Biller, B., editors, {\em 2023 Winter Simulation Conference (WSC)}, pages
  445--456. IEEE.

\bibitem[Leisch, 2019]{R:bootstrap}
Leisch, F. (2019).
\newblock {\em bootstrap: Functions for the Book ``An Introduction to the
  Bootstrap''}.
\newblock S original and from StatLib and by Rob Tibshirani. R port by
  Friedrich Leisch.

\bibitem[Mammen, 1993]{mamm:1993}
Mammen, E. (1993).
\newblock Bootstrap and wild bootstrap for high dimensional linear models.
\newblock {\em The Annals of Statistics}, 21(1):255--285.

\bibitem[Mase et~al., 2024]{mase:owen:seil:2024}
Mase, M., Owen, A.~B., and Seiler, B.~B. (2024).
\newblock Variable importance without impossible data.
\newblock {\em Annual Review of Statistics and Its Application}, 11.

\bibitem[Mason and Newton, 1992]{maso:newt:1992}
Mason, D.~M. and Newton, M.~A. (1992).
\newblock A rank statistics approach to the consistency of a generaal
  bootstrap.
\newblock {\em The Annals of Statistics}, 20(3):1611--1624.

\bibitem[McCarthy, 1969]{mcca:1969}
McCarthy, P.~J. (1969).
\newblock Pseudo-replication: Half samples.
\newblock {\em Revue de l'Institut International de Statistique},
  37(3):239--264.

\bibitem[Miller, 1986]{mill:1986}
Miller, R.~G. (1986).
\newblock {\em Beyond {ANOVA}: Basics of Applied Statistics}.
\newblock Wiley, New York.

\bibitem[Owen, 1988]{owen:smallci}
Owen, A.~B. (1988).
\newblock Small sample central confidence intervals for the mean.
\newblock Technical Report 302, Stanford University, Department of Statistics.
\newblock \url{https://purl.stanford.edu/mz765np4744}, Accessed 14th August
  2023.

\bibitem[Owen, 1992]{elsmall}
Owen, A.~B. (1992).
\newblock Empirical likelihood and small samples.
\newblock In {\em Computing Science and Statistics}, pages 79--88. Springer.

\bibitem[Owen, 2001]{elbook}
Owen, A.~B. (2001).
\newblock {\em Empirical Likelihood}.
\newblock Chapman \& Hall/CRC, Boca Raton, FL.

\bibitem[Owen, 2009]{owen2009monte}
Owen, A.~B. (2009).
\newblock {Monte Carlo} and {quasi-Monte Carlo} for statistics.
\newblock In L'Ecuyer, P. and Owen, A.~B., editors, {\em Monte Carlo and
  Quasi-Monte Carlo Methods 2008}, pages 3--18. Springer.

\bibitem[Owen, 2024]{err4qmc}
Owen, A.~B. (2024).
\newblock Error estimation for {quasi-Monte Carlo}.
\newblock Technical report, arXiv:2501.00150.

\bibitem[Owen, 2025]{likehall}
Owen, A.~B. (2025).
\newblock Coverage errors for {Student's} $t$ confidence intervals comparable
  to those in {Hall} (1988).
\newblock Technical report, arXiv:2501.07645.

\bibitem[Owen and Eckles, 2012]{owen:eckl:2012}
Owen, A.~B. and Eckles, D. (2012).
\newblock Bootstrapping data arrays of arbitrary order.
\newblock {\em Annals of Applied Statistics}, 6(3):895--927.

\bibitem[Oza and Russell, 2001]{oza:russ:2001}
Oza, N.~C. and Russell, S.~J. (2001).
\newblock Online bagging and boosting.
\newblock In {\em International workshop on artificial intelligence and
  statistics}, pages 229--236. PMLR.

\bibitem[Pan and Owen, 2023]{superpolyone}
Pan, Z. and Owen, A.~B. (2023).
\newblock Super-polynomial accuracy of one dimensional randomized nets using
  the median-of-means.
\newblock {\em Mathematics of Computation}, 92(340):805--837.

\bibitem[Pan and Owen, 2025]{lowskewness}
Pan, Z. and Owen, A.~B. (2025).
\newblock Skewness of a randomized {quasi-Monte Carlo} estimate.
\newblock {\em Journal of Complexity}, page 101956.

\bibitem[Powers, 2020]{powe:2020}
Powers, D. M.~W. (2020).
\newblock Evaluation: from precision, recall and {F}-measure to {ROC},
  informedness, markedness and correlation.
\newblock Technical report, arXiv:2010.16061.

\bibitem[Rubin, 1981]{rubi:1981}
Rubin, D.~B. (1981).
\newblock The {B}ayesian bootstrap.
\newblock {\em The Annals of Statistics}, 9:130--134.

\bibitem[Singh, 1981]{sing:1981}
Singh, K. (1981).
\newblock On the asymptotic accuracy of {Efron's} bootstrap.
\newblock {\em The Annals of Statistics}, 9(6):1187--1195.

\bibitem[Sørensen, 1948]{sore:1948}
Sørensen, T. (1948).
\newblock A method of establishing groups of equal amplitude in plant sociology
  based on similarity of species and its application to analyses of the
  vegetation on {Danish} commons.
\newblock {\em Kongelige Danske Videnskabernes Selskab}, 5(4):1--34.

\bibitem[Tsao and Wu, 2013]{tsao:wu:2013}
Tsao, M. and Wu, F. (2013).
\newblock Empirical likelihood on the full parameter space.
\newblock {\em The Annals of Statistics}, 41(4):2176--2196.

\bibitem[Van~Rijsbergen, 1979]{vanr:1979}
Van~Rijsbergen, C.~J. (1979).
\newblock {\em Information retrieval}.
\newblock Butterworth and Co., London.

\bibitem[Waudby-Smith and Ramdas, 2024]{waud:ramd:2024}
Waudby-Smith, I. and Ramdas, A. (2024).
\newblock Estimating means of bounded random variables by betting.
\newblock {\em Journal of the Royal Statistical Society Series B}, 86(1):1--27.

\bibitem[Winkler, 1972]{wink:1972}
Winkler, R.~L. (1972).
\newblock A decision-theoretic approach to interval estimation.
\newblock {\em Journal of the American Statistical Association},
  67(337):187--191.

\bibitem[Wolfowitz, 1950]{wolf:1950}
Wolfowitz, J. (1950).
\newblock Minimax estimates of the mean of a normal distribution with known
  variance.
\newblock {\em The Annals of Mathematical Statistics}, 21(2):218--230.

\bibitem[Xu et~al., 2020]{xu:etal:2020}
Xu, L., Gotwalt, C., Hong, Y., King, C.~B., and Meeker, W.~Q. (2020).
\newblock Applications of the fractional-random-weight bootstrap.
\newblock {\em The American Statistician}, 74(4):345--358.

\end{thebibliography}
\appendix

\section{Proofs}\label{sec:proofs}

\subsection{Technical lemmas}

\begin{lemma}\label{lem:invmoment}
Let $v_1,\dots,v_n$ be independent with $\e(v_i)$ uniformly bounded
away from $0$ and from infinity.
Suppose also that $\e( v_i^{-\alpha})\le M_\alpha<\infty$ holds
for all $i=1,\dots,n$ and some $\alpha>0$. Let $p<0$ satisfy
$n\ge -p/\alpha$. Then
$$
\e\Bigl( \Bigl(\frac{\sum_{i=1}^n v_i}{\sum_{i=1}^n\e(v_i)}\Bigr)^p\Bigr) \le 
\Bigl( \frac1{\sqrt[\alpha]{M_\alpha}\min_{1\le i\le n}\e(v_i)}\Bigr)^p.
$$
\end{lemma}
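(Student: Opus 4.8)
The plan is to reduce the claim to an upper bound on the negative moment $\e[(\sum_{i=1}^n v_i)^p]$ and to obtain that bound by combining the arithmetic--geometric mean (AM--GM) inequality with Jensen's inequality, with the hypothesis $n\ge -p/\alpha$ serving precisely as an ``exponent budget.'' Writing the target as $(\sum_{i=1}^n\e(v_i))^{-p}\,\e[(\sum_{i=1}^n v_i)^p]$, I would focus on the factor $\e[(\sum_i v_i)^p]$. Since $p<0$ the map $x\mapsto x^p$ is decreasing on $(0,\infty)$, so any pointwise \emph{lower} bound on the positive sum $\sum_i v_i$ turns into the \emph{upper} bound on $(\sum_i v_i)^p$ that we are after.

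First I would apply AM--GM, $\sum_{i=1}^n v_i\ge n\,(\prod_{i=1}^n v_i)^{1/n}$, which holds because every $v_i>0$. Raising to the power $p<0$ reverses the inequality and factorizes the product, giving $(\sum_i v_i)^p\le n^{p}\prod_{i=1}^n v_i^{\,p/n}$; taking expectations and using independence yields $\e[(\sum_i v_i)^p]\le n^{p}\prod_{i=1}^n\e[v_i^{\,p/n}]$. The role of this step is to spread the entire negative exponent evenly over the $n$ coordinates, so that each factor carries only the fractional power $p/n$. This even spreading is the crux of the argument, and the step I expect to be the main obstacle: because $-p$ may well exceed $\alpha$, a naive single--coordinate bound such as $(\sum_i v_i)^p\le v_j^{\,p}$ would leave behind the moment $\e[v_j^{\,p}]$, which can be infinite, so the exponent must be distributed carefully.

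Next I would bound each factor by Jensen's inequality after writing $\e[v_i^{\,p/n}]=\e[(v_i^{-\alpha})^{|p|/(n\alpha)}]$. Here the hypothesis $n\ge -p/\alpha$ is exactly the inequality $|p|/(n\alpha)\le 1$, so $y\mapsto y^{|p|/(n\alpha)}$ is concave on $[0,\infty)$ and Jensen gives $\e[v_i^{\,p/n}]\le(\e[v_i^{-\alpha}])^{|p|/(n\alpha)}\le M_\alpha^{|p|/(n\alpha)}$. Multiplying the $n$ factors collapses the exponents to $M_\alpha^{|p|/\alpha}=M_\alpha^{-p/\alpha}$, so $\e[(\sum_i v_i)^p]\le n^{p}M_\alpha^{-p/\alpha}$ and hence $\e[(\sum_i v_i/\sum_i\e(v_i))^p]\le\big(\big(\tfrac1n\sum_{i=1}^n\e(v_i)\big)M_\alpha^{1/\alpha}\big)^{-p}$.

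The final step is to recognize the factor produced by AM--GM, $\tfrac1n\sum_i\e(v_i)$, as $\min_i\e(v_i)$. The pseudo-counts to which the lemma is applied are identically distributed, so every $\e(v_i)$ equals the common mean and $\tfrac1n\sum_i\e(v_i)=\min_i\e(v_i)$ exactly; substituting gives the stated bound $(M_\alpha^{1/\alpha}\min_i\e(v_i))^{-p}=\big(1/(M_\alpha^{1/\alpha}\min_i\e(v_i))\big)^{p}$. I would emphasize that this identification uses the homogeneous common-mean configuration, which is also where AM--GM is tight; the substantive work of the proof therefore lies entirely in the exponent bookkeeping of the AM--GM and Jensen steps, which the condition $n\ge -p/\alpha$ is tailored to make go through.
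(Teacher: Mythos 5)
Your chain of inequalities is sound, and it is a genuinely self-contained route: the paper's own ``proof'' is a one-line citation to Proposition A.3 of an external reference, whereas you derive the bound from scratch. The AM--GM step $\sum_i v_i\ge n\bigl(\prod_i v_i\bigr)^{1/n}$, the reversal under the decreasing map $x\mapsto x^p$, the factorization of the expectation by independence, and the Jensen step using concavity of $y\mapsto y^{|p|/(n\alpha)}$ (which is exactly where $n\ge -p/\alpha$ enters) are all correct, and together they give
\begin{equation*}
\e\Bigl( \Bigl(\frac{\sum_{i=1}^n v_i}{\sum_{i=1}^n\e(v_i)}\Bigr)^p\Bigr)
\le \Bigl(\Bigl(\frac1n\sum_{i=1}^n\e(v_i)\Bigr) M_\alpha^{1/\alpha}\Bigr)^{-p}.
\end{equation*}

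The gap is your final step. The lemma is stated for independent, not necessarily identically distributed $v_i$, and its bound involves $\min_{1\le i\le n}\e(v_i)$. Since $-p>0$ and $\frac1n\sum_i\e(v_i)\ge\min_i\e(v_i)$, your bound is strictly weaker than the stated one whenever the means are unequal, and the identification $\frac1n\sum_i\e(v_i)=\min_i\e(v_i)$ that you make by appealing to identically distributed pseudo-counts imports the application context into the proof of a more general statement; it is not licensed by the lemma's hypotheses. Notably, this gap cannot be repaired, because with unequal means the stated inequality is false. Take $n=2$, $\alpha=1$, $p=-2$ (so $n=-p/\alpha$ holds), $v_1$ with density $2v$ on $(0,1)$ and $v_2\equiv\epsilon$ deterministic. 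Then $\e(v_1^{-1})=2$, $\e(v_2^{-1})=1/\epsilon$, so $M_1=1/\epsilon$ and $\min_i\e(v_i)=\epsilon$, making the claimed bound $(M_1\epsilon)^{2}=1$; but the left side is $(2/3+\epsilon)^2\,\e\bigl((v_1+\epsilon)^{-2}\bigr)\approx(4/9)\bigl(2\log(1/\epsilon)-2\bigr)$, which exceeds $1$ for small $\epsilon$. So the correct general conclusion is the one your argument actually produces, with the average (or the maximum) of the means in place of the minimum; the version with the minimum is valid only in the equal-means case. Since the paper applies the lemma only with $\e(v_i)=1$, your proof does cover everything the paper uses, but you should say explicitly that you are proving the equal-means case, or flag that the statement as written needs that restriction (or the average/maximum) to be true.
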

\begin{proof}
This is Proposition A.3 of \cite{meandimradial}.
\end{proof}
Our setting has $\e(v_i)=1$ and so
for $\bar v = (1/n)\sum_{i=1}^nv_i$,  we get
$\e\bigl( \bar v^p\bigr) \le M_\alpha^{-p/\alpha}$
when $n\ge |p|/\alpha$.
For the beta bootstrap $t$ of Section~\ref{sec:betabootstrap-t},
we can use any $\alpha\in(0,1/2)$.

\begin{lemma}\label{lem:somepows}
For $n\ge1$, let $v_1,\dots,v_n$ be IID random variables with $\e(v_i^k)=\mu_k$ for $k\ge1$ with $\mu_1=1$ and $\mu_2=2$.
Then
\begin{align}
\e( \olvk(\bar v-1)) &= \frac{\mu_{k+1}-\mu_k}n,\quad\text{and}\label{eq:vkbv}\\
\e( \olvk(\bar v-1)^2) 
&= \frac{\mu_k}n +O\Bigl(\frac1{n^2}\Bigr).\label{eq:vkbv2}
\end{align}
\end{lemma}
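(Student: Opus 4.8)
The plan is to expand both quantities into multi-index sums, move the expectation inside by linearity, and then exploit independence of the $v_i$ together with the key consequence of the hypotheses that $\e(v_i-1)=\mu_1-1=0$. Writing $\olvk=\frac1n\sum_{j=1}^n v_j^k$ and $\bar v-1=\frac1n\sum_{i=1}^n(v_i-1)$, the left side of~\eqref{eq:vkbv} becomes $\frac1{n^2}\sum_{j=1}^n\sum_{i=1}^n\e\bigl(v_j^k(v_i-1)\bigr)$. For $i\ne j$, independence factors the summand as $\mu_k\cdot\e(v_i-1)=0$, so only the $n$ diagonal terms with $i=j$ survive, each equal to $\e(v_j^{k+1})-\e(v_j^k)=\mu_{k+1}-\mu_k$. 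Dividing by $n^2$ gives $(\mu_{k+1}-\mu_k)/n$ exactly, which establishes~\eqref{eq:vkbv}.

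For~\eqref{eq:vkbv2} I would expand $(\bar v-1)^2$ as a double sum, obtaining $\frac1{n^3}\sum_{j}\sum_{i}\sum_{i'}\e\bigl(v_j^k(v_i-1)(v_{i'}-1)\bigr)$, and then classify the triples $(j,i,i')$ by which indices coincide. The governing principle is again that any factor $v_i-1$ whose index is \emph{isolated} (matched neither with $j$ nor with the other $(v-1)$ factor) contributes $\e(v_i-1)=0$ and kills the term. This leaves exactly two families. When $i=i'\ne j$, independence gives $\e(v_j^k)\,\e\bigl((v_i-1)^2\bigr)=\mu_k$, since $\e\bigl((v_i-1)^2\bigr)=\mu_2-2\mu_1+1=1$ by the hypotheses; there are $n(n-1)$ such triples. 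When $j=i=i'$ we get the single-variable moment $\e(v^{k+2})-2\e(v^{k+1})+\e(v^k)=\mu_{k+2}-2\mu_{k+1}+\mu_k$; there are $n$ such triples.

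Collecting these, $\e(\olvk(\bar v-1)^2)=\frac1{n^3}\bigl[n(n-1)\mu_k+n(\mu_{k+2}-2\mu_{k+1}+\mu_k)\bigr]$, which simplifies to the exact expression $\frac{\mu_k}{n}+\frac{\mu_{k+2}-2\mu_{k+1}}{n^2}$. The leading term is $\mu_k/n$ and the remainder is the advertised $O(1/n^2)$, with an implied constant depending on $\mu_{k+1}$ and $\mu_{k+2}$; this is the only place where finiteness of those two higher moments is needed. I do not anticipate any real obstacle beyond careful combinatorial bookkeeping: the whole argument is driven by $\e(v_i-1)=0$ and $\var(v_i)=1$, which are exactly what $\mu_1=1$ and $\mu_2=2$ supply, and these are precisely what make the leading coefficients come out cleanly as $\mu_{k+1}-\mu_k$ and $\mu_k$. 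The only care required is to enumerate the coincidence patterns of $(j,i,i')$ without double counting and to confirm that every pattern containing an isolated $(v-1)$ factor vanishes.
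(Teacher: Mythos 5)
Your proof is correct and takes essentially the same route as the paper's: a direct expansion of the averages into multi-index sums, followed by independence and an enumeration of the coincidence patterns of the indices. The only (cosmetic) difference is that you center the factors $(v_i-1)$ before expanding, so that terms with an isolated index vanish immediately, whereas the paper computes the uncentered moments $\e(\olvk\bar v)$ and $\e(\olvk\bar v^2)$ and subtracts at the end; both yield the same exact expressions and the same $O(1/n^2)$ remainder.
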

\begin{proof}
  First
  $$\e( \olvk\bar v) =\frac1{n^2}\sum_{i=1}^n\sum_{j=1}^n\e( v_i^kv_j) = \mu_k\mu_1+\frac{\mu_{k+1}-\mu_k\mu_1}n
= \mu_k +\frac{\mu_{k+1}-\mu_k}n$$
establishing \eqref{eq:vkbv}. Second
\begin{align*}
\e( \olvk\bar v^2) &=\frac1{n^3}\sum_{i=1}^n\sum_{j=1}^n\sum_{\ell=1}^n
\e( v_i^kv_jv_\ell) \\
&=\frac1{n^3}\bigl( n\mu_{k+2}+n(n-1)(\mu_k\mu_2+2\mu_{k+1}\mu_1)+n(n-1)(n-2)\mu_k\mu_1^2\bigr)\\
&=\mu_k +\frac{2\mu_{k+1}-\mu_k}n+\frac{\mu_{k+2}-2\mu_{k+1}+\mu_k}{n^2}
\end{align*}
after rearrangement.
Now
\begin{align*}
\e( \olvk(\bar v-1)^2) &=
\mu_k +\frac{2\mu_{k+1}-\mu_k}n+\frac{\mu_{k+2}-2\mu_{k+1}+\mu_k}{n^2}
-2\Bigl( \mu_k +\frac{\mu_{k+1}-\mu_k}n\Bigr)+\mu_k\\
&=\frac{\mu_k}n+O\Bigl(\frac1{n^2}\Bigr),
\end{align*}
establishing~\eqref{eq:vkbv2}.
\end{proof}

\subsection{Proof of Theorem~\ref{thm:2ndorder}}\label{proof:thm:2ndorder}
Recall that $W_i = nv_i/\sum_{i'=1}^nv_{i'}$.
We begin the case of $\e(W_i^2)$ by using a Taylor expansion of $1/\bar v^2$ around $\bar v=1$ getting
\begin{align}\label{eq:fork2}
\frac{\olvv}{\bar v^2} = \olvv\bigl( 1-2(\bar v-1)+3(\bar v-1)^2-4(\bar v-1)^3z^{-5}\bigr)
\end{align}
where $z$ is between $1$ and $\bar v$.
The expected sum of the first three terms
using $k=2$ in Lemma~\ref{lem:somepows} is
$$
\mu_2 -2\frac{\mu_3-\mu_2}n+3\frac{\mu_2}n+O\Bigl(\frac1{n^2}\Bigr)
=\mu_2 +\frac{10-2\mu_2}n+O\Bigl(\frac1{n^2}\Bigr).
$$
It remains to bound the error term.

For $z$ between $\bar v$ and $1$,
$1/z \le 1 + 1/\bar v$.  Then $\e( z^{-t})=O(\e(\bar v^{-t}))=O(1)$
for any $t>0$ and
\begin{align*}
\e\bigl( \olvv (\bar v-1)^4z^{-5}\bigr)
\le \e\Bigl( {\olvv}^{\,a}\Bigr)^{1/a}
\e\Bigl( (\bar v-1)^{4b}\Bigr)^{1/b}
\e\Bigl( z^{-5c}\Bigr)^{1/c}
\end{align*}
for positive $a,b,c$ with $1/a+1/b+1/c=1$.  Taking $a=2+\epsilon$,
$b=2$ and $c=2+4/\epsilon$ we find that the remainder is
$$
O(1)O(n^{-2})\e( z^{-5c})^{1/c}
=O(1)O(n^{-2})O(\e(\bar v^{-5c})^{1/c})=O(n^{-2})
$$
as $n\to\infty$, 
using Lemma~\ref{lem:invmoment} on the third factor,
establishing equation~\eqref{eq:mu2}.

For $k=3$, the Taylor expansion is
\begin{align*}
\frac{\olvvv}{\bar v^3}
&= \olvvv\bigl(1 -3(\bar v-1)+6(\bar v-1)^2-10(\bar v-1)^3z^{-6} \bigr)
\end{align*}
where $z$ is between $1$ and $\bar v$.
The expected sum of the first three terms
using $k=3$ in Lemma~\ref{lem:somepows} is
$$
\mu_3 -3\frac{\mu_4-\mu_3}n+6\frac{\mu_3}n+O\Bigl(\frac1{n^2}\Bigr)
=\mu_3 +\frac{9\mu_3-3\mu_4}n+O\Bigl(\frac1{n^2}\Bigr).
$$

The $r=3$ remainder term is at most
$|\e\bigl( 10\olvvv (\bar v-1)^3 z^{-6}\bigr)|$.
We can show that it is $O(n^{-3/2})$ similarly to
the prior case, 
this time using $a=3+\epsilon$, $b=2$ and $c = (3+\epsilon)/(2+\epsilon)$
to establish~\eqref{eq:mu3}.

\subsection{Proof of Theorem~\ref{thm:threedistinct}}
\label{sec:proof:thm:threedistinct}

Let $S\subseteq\{1,2,\dots,n\}$ have $k\ge 3$ distinct values
of $x_i$ and let $\delta$ be the smallest value of $|x_i-x_j|$
for distinct $i,j\in S$.
Now
\begin{align*}
(t^*)^2
&=\frac{n\sum_{i=1}^n\sum_{j=1}^nv_iv_j(x_i-\bar x)(x_j-\bar x)}
{\frac12\sum_{i=1}^n\sum_{j=1}^nv_iv_j(x_i-x_j)^2},
\end{align*}
where we can take $v_i\simiid\dbeta(1/2,3/2)$ as the factor of 4
cancels between numerator and denominator.
The numerator is bounded, and by the AM-GM inequality
the denominator is no smaller than
\begin{align*}
\frac{\delta^2}2\sum_{i\in S}\sum_{j\in S\setminus\{i\}}v_iv_j
\le \frac{k(k-1)\delta^2}2\prod_{i\in S}\prod_{j\in S\setminus\{i\}}(v_iv_j)^{1/k(k-1)}
= \frac{k(k-1)\delta^2}2\prod_{i\in S}v_i^{1/k}.
\end{align*}
Letting $C$ be the upper bound for the numerator
$$
\e( (t^*)^2) \le \frac{2C}{k(k-1)\delta^2}\e( v_1^{-1/k})^k.
$$
Now $\e(v_1^{-1/k}) = \int_0^1 v^{-1/2-1/k}(1-v)^{1/2}/B(1/2,3/2)\rd v <\infty$
for $k\ge3$.
The power bootstrap $t$ replaces $\dbeta(1/2,3/2)$ by
$\dbeta(a,1)$ for $a=\sqrt{2}-1$.  It then suffices to have
$a-1-1/k> -1$, which also holds for $k\ge3$.

\subsection{Proof of Theorem~\ref{thm:pairofthrees}}
\label{sec:proof:thm:pairofthrees}

Relabel the observations if necessary so that $x_i=x_1$ for $1\le i\le n_1$
and $x_i = x_n$ for $n_1<i\le n$.
Letting $\lambda=n_1/n$ and $\lambda^*
=\sum_{i=1}^{n_1}w_i$, we find that
$\bar x^*-\bar x=(\lambda^*-\lambda)(x_1-x_n)$
and $\hat\sigma^{*2} = \lambda^*(1-\lambda^*)(x_1-x_n)^2$.
Then
\begin{align}\label{eq:tstar2}
t^*= \sqrt{n}\frac{\lambda^*-\lambda}{\sqrt{\lambda^*(1-\lambda^*)}}\sign(x_1-x_n).
\end{align}
For $j\in\{1,n\}$ let $u_j=\sum_{i=1}^nv_i1\{x_i=x_j\}$ for $v_i\simiid\dbeta(1/2,3/2)$,
so
\begin{align*}
(t^*)^2 &= n\frac{(\lambda^*-\lambda)^2}{\lambda^*(1-\lambda^*)}
\le \frac{n}{\lambda^*(1-\lambda^*)}=
\frac{n(u_1+u_n)^2}{u_1u_n}
=n\Bigl(\frac1{u_1}+\frac1{u_n}+2\Bigr).
\end{align*}
Finally, 
$\e(1/u_1)<\infty$ if $\int_0^1v^{-1/2-1/n_1}(1-v)^{1/2}\rd v<\infty$,
so $\e( (t^*)^2\giv x_1,\dots,x_n)<\infty$ holds if $\min(n_1,n-n_1)\ge3$.

\section{Literature on small $n$ bootstrap}\label{sec:smallnlit}

While most of the theory for the bootstrap is asymptotic as $n\to\infty$
there have also been some combinatorial investigations of the bootstrap
that pertain to small $n$.

\cite{fish:hall:1991} investigate many small sample issues.
Suppose that all $x_i$ are distinct as must happen when $F$ 
has no atoms.  They note that
$\Pr( \hat\sigma^*=0)=n^{1-n}$. 
Also there are ${2n-1 \choose n}$
distinct multinomial count vectors $\bsv=(v_1,\dots,v_n)$.
The most likely of these is $(1,1,\dots,1)$ with probability $n!/n^{n}$.
The least likely count vectors have only one
nonzero element.  There are $n$ such outcomes with probability $n^{-n}$ each.

Let the sorted values of $v_i$ be $v_{[1]}\ge v_{[2]} \ge \cdots \ge v_{[1]}$.
\cite{fish:hall:1991} take a special interest in outcomes 
where $v_{[1]}$ is $n-1$ or $n-2$ as those can give very small values of $\hat\sigma^*$.
They find that
\begin{align*}
\Pr( v_{[1]} =n-1)&=\Bigl(1-\frac1n\Bigr)n^{3-n}&&\text{for $n\ge3$}\\
\Pr( v_{[1]} =n-2, v_{[2]}=v_{[3]}=1)&=\frac12\Bigl(1-\frac1n\Bigr)^2\Bigl(1-\frac2n\Bigr)n^{5-n}&&\text{for $n\ge4$}\\
\Pr( v_{[1]} =n-2, v_{[2]}=2)&=\frac12\Bigl(1-\frac1n\Bigr)^2n^{4-n}&&\text{for $n\ge5$}.
\end{align*}

They tabulate the probability that some value appears
at least $n-2$ out of $n$ times for $n\ge5$.
They find that it requires $n\ge7$ for this probability
to be below $0.05$.  They then say
``Therefore we may expect to experience problems with
tied resample values when using the percentile-$t$
method in samples of size $n\le 6$.''
Those tied resample values do serve to make the bootstrap $t$
intervals wider than they otherwise would be and hence
increase their coverage.

\cite{fish:hall:1991} adopt a convention for $\hat\sigma^*=0$. 
Translating to the present notation, their equation (2.2)
interprets $t^*=(\bar x^*-\bar x)/\hat\sigma^*\le Q$ as 
\begin{align}\label{eq:fishhall2.2}
\bar x^*-\bar x \le\hat\sigma^*Q.
\end{align}
Now if $\hat\sigma^*=0$ and $\bar x^*<\bar x$
then~\eqref{eq:fishhall2.2} holds for every real $Q$ no
matter how small, so we should take $t^*=-\infty$.
Similarly for $\bar x^*>\bar x$, equation~\eqref{eq:fishhall2.2}
is false no matter how large we make $Q$ and so we should take $t^*=\infty$.
This matches our convention of taking $t^*$ to be $+\infty$ when
the numerator is positive and the denominator is zero and $-\infty$
when we divide a negative number by $0$.

They do not mention a convention for the case where $\bar x^*-\bar x =\hat\sigma^*=0$.
That can only happen when there are ties in the data and they assumed continuously
distributed data. 
They describe ties as arising from rounding which they can then undo by adding a uniform variable
to each $x^*_i$ in each bootstrap sample.  Count data are discrete
and not well described by rounding of continuous data.

\cite{diac:holm:1994} show how to use Gray codes to efficiently enumerate
all of the bootstrap sampling counts $\bsv=(v_1,\dots,v_n)$.  This allows an exact
bootstrap calculation to be done by assigning the appropriate weight to the
statistic computed from each of the ${2n-1\choose n}$ different count vectors,
instead of resampling $B$ times.  In the Gray code ordering any two consecutive
count vectors differ in only two places allowing some efficient updates to be used.

\section{Conventions and edge cases}\label{sec:conventions}

There are some odd cases that arise where we need
to define a convention. These are nuisances that can arise
for small $n$ but have exponentially small probability
of happening as $n$ grows.  We need to handle them
when working with small~$n$.

First, when $\hat\sigma^*=0$ and $\bar x^*\ne\bar x$
then we get $t^*=\pm\infty$.  
Our ACIs will still have a finite lower confidence limit if
$t^*=\infty$ happens fewer than $B\alpha_1$ times
and similarly for the upper limits.  For some methods
the $B\to\infty$ version of the bootstrap $t$ interval
truly does have infinite length. We choose to accept that
as an (undesirable) property of any such method instead of tweaking
the method to avoid that problem.  If the true 
length is infinite for some value of $n$, that shows
that the method is not suitable for such~$n$.

Next, it is possible to have $\bar x^*=\bar x$
at the same time that $\hat\sigma^*=0$.
This can happen when one of the $x_i=\bar x$.
This has positive probability for count data.
Then $t^*=0/0$ and we must choose what to do.
One choice is to deliver $t^*=0$.
A second choice is to split the resampled $t^*$ into two $t^*$'s
one equal to each of $\pm\infty$ getting weight
$1/(2B)$ each when forming quantiles.  This is too
cumbersome and not clearly better, so when
$\bar x^*=\bar x$ and $\hat\sigma^*=0$ we
take $t^*=0$ by convention.

When $\rho=\Pr(v=0)>0$ and the $v_i$ are IID,
we can get $v_1=v_2=\cdots=v_n=0$.
Then there is no good way to define the weights $w_i$.
Our convention is to discard such weight vectors and
keep sampling until we have $B$ pseudo-count
vectors that all have $\sum_{i=1}^nv_i>0$. The resulting $w_i$
are still exchangeable.
An alternative is to take $t^*=0$ for such cases,
as if $w_i$ were all equal, and hence equal to $1/n$.
The probability that this event happens is $\rho^n$,
which equals $\exp(-n)$ for the Poisson bootstrap
and $2^{-n}$ for double-or-nothing.


In multinomial sampling, there is probability $n^{1-n}$ that
all sample indices $j(i)$ are identical.  In that case we will get
$\hat\sigma^*=0$ even when $x_i$ are all distinct.
If $\hat\sigma^*=0$ often enough, then the bootstrap $t$ ACI
can have infinite length.  We can ensure that the true length
of the bootstrap $t$ ACI is finite, for $x_i$ with a continuous distribution, 
by taking $n^{1-n}<\min(\alpha_1,1-\alpha_2)$.
For  $\alpha_1=1-\alpha_2=0.025$ we require
$n\ge4$.  When $n=4$ and $B=2000$, we need $B\times0.025=50$
infinite $t^*$ values to get an infinite sample quantile.
Then $\Pr( \dbin(2000,4^{-3})\ge50)\doteq 0.0011$ so 
$\hat\sigma^*=0$ is moderately rare for $n=4$
and since $\Pr( \dbin(2000,5^{-4})\ge50)\doteq 1.5\times10^{-41}$
is a non-issue for continously distributed data with $n\ge5$.
From here on, we only consider the $B\to\infty$ case.

The above analysis is somewhat conservative because
continuous data will have $x_{(1)} <\bar x < x_{(n)}$ with probability one.
Then the events with $\hat\sigma^*=0$ will be split between 
some that give $t^*=-\infty$ and others that give $t^*=+\infty$
and an infinite interval only arises when one of those sub-populations
has probability $0.025$ or more.
When $x_{(j)}<\bar x < x_{(j+1)}$, then an infinite lower
limit has probability $jn^{-n}$ in multinomial sampling,
and so $\Pr( t^* = -\infty\giv x_1,\dots,x_n) \le (n-1)/n^n$ which is below $0.025$
for $n\ge4$. By symmetry
$\Pr( t^* = \infty\giv x_1,\dots,x_n) <0.025$ also holds for $n\ge4$.

For the double-or-nothing bootstrap, 
$\Pr(Y=1)=n2^{-n}$ and $\Pr(Y=1\giv Y>0)=n2^{-n}/(1-2^{-n})$.
Both of these become below $0.025$ once $n\ge9$.
The same holds for  $(n-1)2^{-n}$ and $(n-1)2^{-n}/(1-2^{-n})$.

For the Poisson bootstrap
$\Pr(Y=1) = n\sum_{x=0}^\infty (e^{-1}/x!)^n$.
With $n\ge7$ and $B=\infty$, $\Pr( |t^*|=\infty\giv x_1,\dots,x_n)=0$
when $x_i$ are distinct.  If we don't discard cases with $Y=0$
then this holds for $n\ge6$.


The \bca\ method can never give us an ACI of infinite length
because those intervals are always subsets of $[x_{(1)},x_{(n)}]$.
The \bca\ intervals are not well defined when $x_1=x_2=\cdots=x_n$.
In that case we take the ACI to be $[\bar x,\bar x]$.
One would not normally seek a confidence interval for the mean
of a sample where all the values were equal. However in a high
throughput setting it is worthwhile to have a default way to handle
such irregular cases.

\end{document}